\documentclass[12pt,draft]{article}
\usepackage{tikz-cd}
\usepackage{amsmath}
\usepackage{amssymb}
\usepackage{amsthm}
\usepackage{geometry}
\usepackage{titlesec}
\usepackage[numbers,sort&compress]{natbib}
\usepackage[all]{xy}
\usepackage{amsfonts}
\usepackage[numbers,sort&compress]{natbib}
\usepackage{cases}
\usepackage{amsthm}
\usepackage[active]{srcltx}
\usepackage{autobreak}
\usepackage[shortlabels]{enumitem}
\numberwithin{equation}{section}
\theoremstyle{definition}

\titleformat*{\section}{\large\bfseries}
\titleformat*{\subsection}{\normalsize\bfseries}

\def\la{\lambda}

\def\Z{\mathbb{Z}}
\def\N{\mathbb{N}}

\def\L{\mathcal{L}}
\def\G{\mathcal{G}}
\def\Glm{\mathcal{G}_{\lambda,\mu}}

\def\C{\mathbb{C}}

\def\mi{\mathbf{i}}
\def\mj{\mathbf{j}}
\def\mk{\mathbf{k}}
\def\mx{\mathbf{x}}
\def\my{\mathbf{y}}
\def\mz{\mathbf{z}}
\def\Sj{\hat{s}_{j}}
\def\Re{\mathrm{Re}}

\numberwithin{equation}{section}
\newtheorem{theo}{Theorem}[section]
\newtheorem{defi}[theo]{Definition}
\newtheorem{coro}[theo]{Corollary}
\newtheorem{lemm}[theo]{Lemma}
\newtheorem{prop}[theo]{Proposition}
\newtheorem{clai}{Claim}

\newtheorem{rema}[theo]{Remark}

\newtheorem{remark}[theo]{Remark}

\begin{document}
	
	\begin{center}
		\bfseries\large
		{Simple restricted modules   over the deformative Schr\"{o}dinger-Virasoro algebra\,{$^1\,$}}
		\footnote{$^1\,$Supported by the National Natural Science Foundation of China (No. 12471027) and Natural Science Foundation of Shanghai (No. 24ZR1471900).\\\indent \ \  $\ast\, $Corresponding author: Xiaoqing Yue (xiaoqingyue@tongji.edu.cn)}
		
		\mdseries\normalsize
		\bigskip
		Haibo Chen$^{\dagger}$, Yongtao Liu$^{\ddagger,\S}$, Xiaoqing Yue$^{\ddagger,\S,\ast}$
		
		\footnotesize
		\smallskip
		$\dagger$ School of Science, Jimei University, Xiamen, Fujian 361021, China
		
		\smallskip
		$\ddagger$ School of Mathematical Sciences, Tongji University, Shanghai 200092, China
		
		\smallskip
		$\S$ Key Laboratory of Intelligent Computing and  Applications(Tongji University), Ministry of Education
		
		\smallskip
		E-mails:hypo1025@jmu.edu.cn, 2211178@tongji.edu.cn, xiaoqingyue@tongji.edu.cn
	\end{center}
	{
		\footnotesize
		\noindent\textbf{Abstract}. This paper investigates simple restricted modules over the deformed Schr\"{o}dinger-Virasoro algebra $\Glm$, which gives a complete classification of them  for some  $\lambda,\mu\in\mathbb{C}$. More precisely,  we provide a systematic construction of these modules, including highest weight modules and Whittaker modules, by inducing simple modules from the positive part's quotient algebras. We prove that any simple restricted $\Glm$-module satisfying certain injective conditions is isomorphic to such an induced module.   As an application,  we obtain some simple weak $V(c)$-modules over vertex  algebras associated to $\mathcal{G}_{\lambda,\mu}$ for some $\lambda,\mu\in\mathbb{C}$.   Note that our results include the Schr\"{o}dinger-Virasoro algebra and the deformed $\mathfrak{bms}_3$ algebra as special cases, thereby improving upon some of the previously reported results of \cite[Theorem 3.4]{CHS} and \cite[Theorem 2]{Cqf}. This work effectively classifies and generalizes the representation theory of the deformed family.
		
		\smallskip
		\noindent\textit{Keywords}:\ Schr\"{o}dinger-Virasoro algebra, restricted module, highest weight module, Whittaker module.
		
		\smallskip
		\noindent\textit{Mathematics Subject Classification (2020)}: 17B10, 17B65, 17B68.
		
		%	17B05  	Structure theory for Lie algebras and superalgebras
		%	17B40  	Automorphisms, derivations, other operators for Lie algebras and super algebras
		%	17B65  	Infinite-dimensional Lie (super)algebras [See also 22E65]
		%	17B68  	Virasoro and related algebras
	}
	
	\section{Introduction}\label{sec1}
	The Schr\"{o}dinger-Virasoro algebra is an infinite-dimensional Lie algebra that plays a fundamental role in physics and conformal field theory \cite{ LS, LS2, RU, TZ, Uj, ZT}. It was originally introduced by Henkel in the study of free Schr\"{o}dinger equations and describes the local invariance of certain physical systems \cite{Hm}. The Schr\"{o}dinger-Virasoro algebra is an extension of the Virasoro algebra by a nilpotent Lie algebra consisting of a bosonic current of weight $3/2$ and a bosonic current of weight $1$.  Since then, its representation theory, including weight modules, Verma modules, and Whittaker modules, has been extensively investigated by numerous authors \cite{CGLW, ALZ, CHS, LS, RU, TZ, ZT, ZTL}. In 2006, Roger and Unterberger introduced a three-parameter deformation of this algebra, denoted as $\Glm$, which arises naturally from the deformation of the contact structure on the $(1+1)$-dimensional space-time \cite{RU}. The study of its representation theory is not only a natural mathematical extension but also essential for understanding how the deformation parameters $\lambda$ and $\mu$ affect the underlying algebraic properties. 
	
	One of the most important categories in the representation theory of infinite-dimensional Lie algebras is the category of smooth modules (also referred to as restricted modules) (see, e.g., \cite{CG, FGXZ, MZ, LPX, CYZ, MNTZ}). This category is not only a natural generalization of highest weight modules but also serves as the bridge between Lie algebras and vertex operator algebras (VOAs) \cite{Kac, LL}. The characterization of simple restricted modules has been studied for a variety of algebraic structures, such as the Virasoro algebra \cite{MZ}, the Heisenberg-Virasoro algebra \cite{CG,TYZ}, the Neveu-Schwarz algebra \cite{LPX2}, and the planar Galilean conformal algebra \cite{GG,CY}. Recent developments also include the study of restricted modules for other deformed structures such as the deformed $\mathfrak{bms}_3$ algebra \cite{Cqf}.
	
	Now we recall the definition of the {\em deformative Schr\"{o}dinger-Virasoro algebra} $\Glm(\epsilon) \,(\lambda,\mu$ $\in\C,\epsilon=\{0,\frac{1}{2}\})$ \cite{RU}, which is
	an infinite-dimensional  Lie algebra
	with the $\C$-basis  $\{M_m,Y_{m-\epsilon},$ $L_m,C
	\mid m\in \Z\}$ and the following   Lie brackets:
	\begin{equation}\label{def1.1}
		\aligned
		&[L_m,L_n]= (n-m)L_{m+n}+\delta_{m+n,0}\frac{m^{3}-m}{12}C,\\&
		[L_m,Y_{n-\epsilon}]= \Big(n-\frac{\lambda+1}{2}m+\mu-\epsilon\Big)Y_{m+n-\epsilon},
		\\&
		[Y_{m-\epsilon},Y_{n-\epsilon}]= (m-n)M_{m+n-2\epsilon},\\&
		[L_m,M_n]=(n-\lambda m+2\mu) M_{m+n},
		\\&
		[M_m,M_n]=[M_m,Y_{n-\epsilon}]=[\Glm(\epsilon),C]=0, \quad \forall\, m,n\in\Z.
		\endaligned
	\end{equation}
	Note that  the center of $\Glm(\epsilon)$ is spanned by $C$. Denote $\Glm^{(r)}(\epsilon)=\oplus_{i\in\Z}(\delta_{i,r}\C L_{i}\oplus \delta_{i+\epsilon,r}\C Y_{i+\epsilon} \oplus \delta_{i,r}\C M_{i}) \oplus \delta_{0,r}\C C$ for all $r\in\Z+\epsilon$, then we have $\Glm(\epsilon)=\oplus_{r\in\Z+\epsilon}\,\Glm^{(r)}(\epsilon)$. Clearly, the subalgebra of $\mathcal{G}_{\lambda,\mu}(\epsilon)$ spanned by $\{L_m,M_m,C\mid m\in\mathbb{Z}\}$  is isomorphic to the Lie algebra  $W(a,b)$, which includes a lot of important Lie algebras such as Virasoro algebra, $W$-algebra $W(2,2)$, twisted Heisenberg-Virasoro algebra and so on.   In addition, the  Schr\"{o}dinger-Virasoro algebra $\G_{0,0}(\frac{1}{2})$ and  the deformed $\mathfrak{bms}_3$ algebra $\G_{1,0}(0)$ are special cases for the
	deformative Schr\"{o}dinger-Virasoro algebra (see \cite{TZ,Cqf}).
	
	From  $\Glm(0)\cong\Glm(\frac{1}{2})$, we only consider  Lie algebra $\Glm:=\Glm(\frac{1}{2})$  throughout  this paper. In \cite{LS},  the authors explored the deep connection between modules over $\Glm$ and vertex  algebras. They  employed Li's theory of local systems \cite{Lhs} to associate $\Glm$ with a vertex algebra $V(h, c)$, which showed that there is a one-to-one correspondence between restricted $\Glm$-modules and related vertex algebra modules. In the present paper,  we investigate the simple restricted modules over the deformed Schr\"{o}dinger-Virasoro algebra $\Glm$, and explore their applications in the theory of vertex algebras.   Unlike the standard Schr\"{o}dinger-Virasoro case, the deformation parameters $\lambda$ and $\mu$ introduce non-trivial coefficients into the Lie brackets $[L_m, M_n]$ and $[L_m, Y_{p}]$. Besides, we show that any simple restricted module satisfying these conditions is necessarily isomorphic to one of the modules in our construction, thereby providing an explicit characterization of this category.

	The paper is organized as follows. In Section \ref{Sec2}, we provide necessary preliminaries. In Section \ref{Sec3}, we construct a family of simple restricted modules and provide the technical lemmas required for proving simplicity through degree analysis in Theorem \ref{th1}. We   give a  complete classification of simple restricted modules  over $\Glm$ for some $\lambda,\mu\in\mathbb{C}$ in Theorems \ref{th2} and \ref{th3}. In Section \ref{Sec4}, from the equivalence between the category of restricted modules and vertex algebra modules, we give the structure of a weak $V(c)$-module from a restricted $\Glm$-module. Finally, in Section \ref{Sec5}, we discuss some specific examples, including highest weight modules, Whittaker modules and some certain cases of $(\la,\mu)$. It is worth noting that when $\mu\neq0$, the highest weight module in the present context exhibits a non-standard construction form.

	Throughout this paper, we denote by $\C,\Z,\N$  and $\Z_+$ the sets of complex numbers, integers, nonnegative integers and positive integers, respectively. We use $(a,b)=(c,d)$ to represent $a=c$ and $b=d$ for convenience. For a complex number $z$, we denote by $\Re(z)$  the real part of $z$.
	
	\section{Preliminaries}\label{Sec2}
	In this section, we shall construct a class of   induced   $\Glm$-modules $\mathrm{Ind}(V)$, where $V$ is a simple module over a certain subalgebra of $\mathcal{G}_{\lambda,\mu}$. First, we recall some definitions and  results  for later use.
	\begin{defi}\rm
		Let $V$ be a module of a Lie algebra $\L$ and $x\in\L$.
		
		{\rm (1)} If for any $v\in V$ there exists $n\in\Z_+$ such that $x^nv=0$,  then we call that the action of  $x$  on $V$ is {\em locally nilpotent}.
		Similarly, the action of $\L$   on $V$  is {\em locally nilpotent} if for any $v\in V$ there exists $n\in\Z_+$ such that $\L^nv=0$.
		
		{\rm (2)} If for any $v\in V$ we have $\mathrm{dim}(\sum_{n\in\Z_+}\C x^nv)<+\infty$, then we call that the action of $x$ on $V$ is  {\em locally finite}.
		Similarly, the action of $\L$  on $V$  is  {\em locally finite}
		if for any $v\in V$ we have $\mathrm{dim}(\sum_{n\in\Z_+} {\L}^nv)<+\infty$.
	\end{defi}
	One can check that  the action of $x$ on $V$ is locally nilpotent implies that the action of $x$ on $V$ is locally finite.
	If $\L$ is a finitely generated Lie algebra, then we have that the action of $\L$ on $V$ is locally nilpotent implies that the action of
	$\L$
	on $V$ is locally finite.
	Now we  recall the definition of restricted modules.
	\begin{defi}\rm
		Let $\L=\oplus_{i\in\frac{1}{2}\Z}\L_i$ be a $\frac{1}{2}\Z$-graded Lie algebra. An $\L$-module $V$ is called the {\em restricted module} if for any $v\in V$,  there exists $n\in\N$ such that $\L_{n+i}v=0$ for all $i\in\frac{1}{2}\Z_+$.
	\end{defi}
	
	\begin{defi}\rm
		Let $\L$ be a Lie algebra with triangular decomposition $\L=\L^{+}\oplus\L_{0}\oplus\L^{-}$. Let $V$ be an $\L$-module and let $\psi:\L^{+}\rightarrow \C$ be a Lie algebra homomorphism. A vector $v\in V$ is called a \emph{Whittaker vector} if $xv=\psi(x)v$ for every $x\in\L^{+}$. An $\L$-module $V$ is called a \emph{Whittaker module of type} $\psi$ if there is a Whittaker vector $w\in V$ which generates $V$. In this case, we call $w$ a \emph{cyclic Whittaker vector}.
	\end{defi}
	
	Denote by  $\mathbf{S}$  the set of all infinite vectors of the form $\mi:=(\ldots, i_2, i_1)$ with entries in $\N$,
	satisfying the condition that the number of nonzero entries is finite. Let $\mathbf{0}$ be the element $(\ldots, 0, 0)\in\mathbf{S}$. For
	$i\in\Z_+$, denote $\epsilon_i=(\ldots,0,1,0,\ldots,0)\in\mathbf{S}$,
	where $1$ is
	in the $i$'th  position from the right. For any $\mi\in\mathbf{S}$, we write
	$$\mathbf{w}(\mi)=\sum_{s\in\Z_+}s\cdot i_s,\ \ \ \mathbf{d}(\mi)=\sum_{s\in\Z_+} i_s,$$
	which are nonnegative integers. For any nonzero  $\mi\in\mathbf{S}$, let $p$ and $q$ be the largest and smallest integers such that $i_p	 \neq0$ and $i_q	\neq0$ respectively,
	and define  $\mi^\prime=\mi-\epsilon_p$
	and  $\mi^{\prime\prime}=\mi-\epsilon_q$.
	
	The following definitions of some total order are given in \cite{MZ,CG}. We always consider that  $\mathbf{0}$ is the minimum element.
	\begin{defi}\rm\label{def2.3}
		{\rm (1)} Denote by $>$ the   {\em lexicographical total order}  on  $\mathbf{S}$, defined as follows: for any $\mi,\mj\in\mathbf{S}$
		$$\mj >  \mi \ \Leftrightarrow \ \mathrm{ there\ exists} \ r\in\Z_+ \ \mathrm{such \ that} \ (j_s=i_s,\ \forall\, s>r) \ \mathrm{and} \ j_r>i_r.$$
		{\rm (2)} Denote by $\succ$ the   {\em reverse  lexicographical  total order}  on  $\mathbf{S}$,  defined as follows: for any $\mi,\mj\in\mathbf{S}$
		$$\mj \succ \mi \ \Leftrightarrow \ \mathrm{ there\ exists} \ r\in\Z_+ \ \mathrm{such \ that} \ (j_s=i_s,\ \forall\, 1\leq s<r) \ \mathrm{and} \ j_r>i_r.$$
		Now we can induce a {\em principal total order} on $\mathbf{S}\times\mathbf{S}\times\mathbf{S}$, still denoted by $\succ$:
		\begin{equation*}\aligned
			(\mi,\mj,\mk) \succ (\mathbf{l},\mathbf{m},\mathbf{n})\  \Leftrightarrow \
			\ &(\mk,\mathbf{w}(\mk))\succ(\mathbf{n},\mathbf{w}(\mathbf{n})) \quad \mathrm{or}\\&
			\mk=\mathbf{n}\
			\mathrm{and}\ (\mj,\mathbf{w}(\mj)) \succ (\mathbf{m},\mathbf{w}(\mathbf{m}))
			\quad \mathrm{or}\\&
			\mk=\mathbf{n}, \ \mj=\mathbf{m} \
			\mathrm{and }\ \mi > \mathbf{l},\quad \forall\,\mi,\mj,\mk,\mathbf{l},\mathbf{m},\mathbf{n}\in\mathbf{S}
			%NEW
			%(i_2,i_1,\mj,\mk) \succ (l_2,l_1,\mathbf{m},\mathbf{n})\  \Leftrightarrow\
			%\ &(\mk,\mathbf{w}(\mk))\succ(\mathbf{n},\mathbf{w}(\mathbf{n})) \quad \mathrm{or}\\&
			%\mk=\mathbf{n}\
			%\mathrm{and}\ (\mj,\mathbf{w}(\mj)) \succ (\mathbf{m},\mathbf{w}(\mathbf{m}))
			%\quad \mathrm{or}\\&
			%\mk=\mathbf{n}, \ \mj=\mathbf{m} \
			%\mathrm{and }\ i_1 > l_1,
			%\quad \mathrm{or}\\&
			%\mk=\mathbf{n}, \ \mj=\mathbf{m} \
			%\mathrm{and }\ i_1 = l_1\ \mathrm{and }\ i_2 > l_2 ,\\&
			%\quad
			%\ \mathrm{and}\ i_1,i_2,l_1,l_2\in\N
			.
			\endaligned
		\end{equation*}
	\end{defi}
	For any $d_1,d_2\in\N$ with $d_1\geq2d_2-1$,   set
	$$\Glm^{(d_1,d_2)}=\sum_{i\in\N}(\C M_{i-d_1}\oplus\C (1-\delta_{i,0})Y_{i-d_2-\frac{1}{2}}\oplus\C L_i)\oplus\C C.$$
	It is clear that $\Glm^{(d_1,d_2)}$ is a subalgebra of $\Glm$.
	
	Letting  $V$ be a simple $\Glm^{(d_1,d_2)}$-module,  we have the following  induced $\Glm$-module
	$$\mathrm{Ind}(V)=\mathcal{U}(\Glm)\otimes_{\mathcal{U}(\Glm^{(d_1,d_2)})}V.$$
	
	Since simple modules over one of the subalgebras of $\Glm$ containing the central element $C$ are usually considered in the sequel,
	we always assume that the action of $C$ is scalar $c$, and we say the modules are of level $c$.

	Fix $d_1,d_2\in\N$  and let $V$ be a simple $\Glm^{(d_1,d_2)}$-module. For $\mi,\mj,\mk\in\mathbf{S}$, we denote
	$$M^{\mi} Y^{\mj} L^{\mk}=\ldots M_{-d_1-2}^{i_2} M_{-d_1-1}^{i_1}\ldots Y_{-d_2-\frac{3}{2}}^{j_2} Y_{-d_2-\frac{1}{2}}^{j_1}\ldots L_{-2}^{k_2} L_{-1}^{k_1}\in \mathcal{U}(\Glm).$$
	%$$M^{(i_2,i_1)} Y^{\mj} L^{\mk}=M_{-d_1-2}^{i_2} M_{-d_1-1}^{i_1}\ldots Y_{-d_2-\frac{3}{2}}^{j_2} Y_{-d_2-\frac{1}{2}}^{j_1}\ldots L_{-2}^{k_2} L_{-1}^{k_1}\in \mathcal{U}(\Glm).$$
	%For $M_{-d_1-t-2}$ and $t\in\Z_+$, since $d_1\geq2d_2-1$, by (\ref{def1.1}) we have
	%\begin{equation*}\aligned
	%	M_{-d_1-t-2}V&=\frac{1}{d_1+t+1-2d_2}[Y_{-d_1+d_2-t-\frac{3}{2}},Y_{-d_2-\frac{1}{2}}]V\\&
	%	=\frac{1}{d_1+t+1-2d_2}(Y_{-d_1+d_2-t-\frac{3}{2}}Y_{-d_2-\frac{1}{2}}-Y_{-d_2-\frac{1}{2}}Y_{-d_1+d_2-t-\frac{3}{2}})V.
	%	\endaligned
	%\end{equation*}
	It follows from the $\mathrm{PBW}$ Theorem that every element of $\mathrm{Ind}(V)$ can be uniquely written as  the
	form
	\begin{equation}\label{def2.1}
		\sum_{\mi,\mj,\mk\in\mathbf{S}}M^{\mi} Y^{\mj} L^{\mk} v_{\mi,\mj,\mk},
		%\sum_{i_2,i_1\in\N,\ \mj,\mk\in\mathbf{S}}M^{(i_2,i_1)} Y^{\mj} L^{\mk} v_{i_2,i_1,\mj,\mk},
	\end{equation}
	where all  $v_{\mi,\mj,\mk}\in V$ and only finitely many of them are nonzero. For any $v\in\mathrm{Ind}(V)$ as in  \eqref{def2.1}, we denote by $\mathrm{supp}(v)$ the set of all $(\mi,\mj,\mk)\in \mathbf{S}\times \mathbf{S}\times\mathbf{S}$  such that $v_{\mi,\mj,\mk}\neq0$.
	For a nonzero $v\in \mathrm{Ind}(V)$, we write $\mathrm{deg}(v)$  the maximal (with respect to the principal total order on $\mathbf{S}\times\mathbf{S}\times\mathbf{S}$) element in $\mathrm{supp}(v)$, called the {\em degree} of $v$. Note that here and later
	we make the
	convention that  $\mathrm{deg}(v)$ is defined only for  $v\neq0$.
	
	\begin{lemm}\label{le2.4}
		$\Glm\cong \G_{\lambda,\mu+m}$, $\forall\, m\in\Z$.
	\end{lemm}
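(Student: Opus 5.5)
We will write down an explicit index-shifting isomorphism $\phi:\Glm\to\G_{\lambda,\mu+m}$ that fixes the Virasoro part and shifts the indices of $Y$ and $M$. To distinguish the two algebras, denote the basis of $\G_{\lambda,\mu+m}$ by $\{M'_n,Y'_{n-\frac12},L'_n,C'\}$. Since $\mu$ only enters through $\mu$ in the $Y$-bracket and $2\mu$ in the $M$-bracket, we define
\begin{equation*}
\phi(L_n)=L'_n,\quad \phi(C)=C',\quad \phi(Y_{n-\frac12})=Y'_{n-m-\frac12},\quad \phi(M_n)=M'_{n-2m},\qquad n\in\Z.
\end{equation*}
This map sends a basis bijectively to a basis, so it is a linear isomorphism. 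It therefore suffices to check that $\phi$ preserves the brackets \eqref{def1.1}.

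We verify the brackets case by case.
\begin{itemize}
\item The $[L,L]$ bracket and all brackets involving $C$ are untouched. The brackets $[M,M]$ and $[M,Y]$ are zero on both sides.
\item For $[L_k,Y_{n-\frac12}]$, the coefficient in $\Glm$ is $n-\frac{\la+1}{2}k+\mu-\frac12$. The coefficient of $[L'_k,Y'_{(n-m)-\frac12}]$ in $\G_{\la,\mu+m}$ is $(n-m)-\frac{\la+1}{2}k+(\mu+m)-\frac12$, which is the same. The resulting index is $k+n-m-\frac12$, matching $\phi(Y_{k+n-\frac12})$.
\item For $[L_k,M_n]$, the coefficient $n-\la k+2\mu$ becomes $(n-2m)-\la k+2(\mu+m)$, which is equal, and the indices match.
\item For $[Y_{p-\frac12},Y_{q-\frac12}]=(p-q)M_{p+q-1}$, the image is $[Y'_{p-m-\frac12},Y'_{q-m-\frac12}]=(p-q)M'_{p+q-2m-1}=\phi\big((p-q)M_{p+q-1}\big)$.
\end{itemize}

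There is no real obstacle here. The only point needing care is choosing the shifts so that both the $Y$ shift of $m$ and the $M$ shift of $2m$ are compatible with the $[Y,Y]$ bracket, and the computation above confirms that they are.
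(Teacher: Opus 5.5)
Your proposal is correct and uses exactly the same isomorphism as the paper, namely $L_n\mapsto L_n$, $Y_{n-\frac{1}{2}}\mapsto Y_{n-m-\frac{1}{2}}$, $M_n\mapsto M_{n-2m}$, $C\mapsto C$. The paper only asserts that the bracket check is straightforward, and your case-by-case verification, including the compatibility of the shifts $m$ and $2m$ with the $[Y,Y]$ bracket, supplies exactly those omitted details.
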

	
	\begin{proof}
		For any $m\in\Z$, let $\varphi$ be a linear map from $\Glm$ to $\G_{\lambda,\mu+m}$ defined by
		$$\varphi\ :\ L_n\mapsto L_n,\ Y_{n-\frac{1}{2}}  \mapsto Y_{n-m-\frac{1}{2}},\ M_{n}  \mapsto M_{n-2m},\ C\mapsto C,$$
		where $n\in\Z$.
		It is straightforward to verify that $\varphi$ is a Lie algebra isomorphism.
	\end{proof}
	\begin{remark}\label{rema1}
		One can check if taking $m=-\frac{1}{2}$, by the same method, we have $\Glm(\frac{1}{2})\cong \G_{\la,\mu-\frac{1}{2}}(0)$. As mentioned earlier, we only need to discuss $\Glm(\frac{1}{2})$. Besides, without loss of generality, we can assume $\Re(\mu)\in[-1,0)$ from now on, which is useful for the proof of Theorem \ref{th2}.
	\end{remark}

	\section{Characterization of simple restricted modules}\label{Sec3}
	The purpose of this section is to present two main results of this paper. We first prove that the   induced $\Glm$-module $\mathrm{Ind}(V)$ is simple  under certain   conditions which  are stated in  Theorem \ref{th1}.  Then we shall show that under the conditions in  Theorem \ref{th1}, any simple $\Glm$-module with locally finite actions of elements $M_i,(1-\delta_{j,0})Y_{j-\frac{1}{2}},L_k$ for sufficiently large $i,j,k\in\N$ is isomorphic to one of the induced  $\Glm$-modules.
	
	We first prove the following lemma.
	\begin{lemm}\label{LYM}
		Let $V$ be a $\Glm$-module. For any $m,n\in\Z$ and $i\in\Z_+$, we have the following relations in $\mathcal{U}(\Glm)$:
		\begin{equation}\label{YYLM}
			\aligned
			&[Y_{m-\frac{1}{2}},Y_{n-\frac{1}{2}}^{i}]=i(m-n) Y_{n-\frac{1}{2}}^{i-1}M_{m+n-1},\\&
			[L_m,M_n^i]=i(n-\lambda m+2 \mu)M_n^{i-1}M_{m+n},
			\endaligned
		\end{equation}
		and
		\begin{align}
			&[M_m,L_n^i]=\sum_{s=1}^{i}\binom{i}{s}\prod_{t=0}^{s-1}\big( (\lambda-t) n-m-2\mu \big)L_n^{i-s} M_{m+sn}\label{MLYL1},\\&
			[Y_{m-\frac{1}{2}},L_n^i]=\sum_{s=1}^{i}\binom{i}{s}\prod_{t=0}^{s-1}\Big(\frac{\lambda+1-2t}{2}n+\frac{1}{2}-m-\mu\Big)L_n^{i-s} Y_{m+sn-\frac{1}{2}}\label{MLYL2}.
		\end{align}
	\end{lemm}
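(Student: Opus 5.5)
These are identities in $\mathcal{U}(\Glm)$, so no module structure is needed. We prove each by induction on $i$, using the derivation rule $[x,ab]=[x,a]b+a[x,b]$ in $\mathcal{U}(\Glm)$ together with the brackets in \eqref{def1.1}.

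\textbf{The relations \eqref{YYLM}.} From \eqref{def1.1}, $[Y_{m-\frac12},Y_{n-\frac12}]=(m-n)M_{m+n-1}$. Since $M_{m+n-1}$ commutes with every $Y$, the derivation rule gives
$$[Y_{m-\frac12},Y_{n-\frac12}^{i}]=\sum_{a=0}^{i-1}Y_{n-\frac12}^{a}\,(m-n)M_{m+n-1}\,Y_{n-\frac12}^{i-1-a}=i(m-n)Y_{n-\frac12}^{i-1}M_{m+n-1}.$$
The second relation follows in the same way. Here $[L_m,M_n]=(n-\lambda m+2\mu)M_{m+n}$, and all $M$'s commute with one another, so the $i$ summands are equal and collapse to $i(n-\lambda m+2\mu)M_n^{i-1}M_{m+n}$.

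\textbf{The relation \eqref{MLYL1}.} The cleaner route is the standard identity
$$\mathrm{ad}(x)(y^i)\text{-type formula: }\quad y^i x=\sum_{s=0}^{i}\binom{i}{s}\big((\mathrm{ad}\,y)^s x\big)\,y^{i-s},$$
or equivalently $x y^i=\sum_{s}\binom{i}{s}(-1)^s y^{i-s}\,(\mathrm{ad}\,y)^s(x)$. The paper's formula has $L_n^{i-s}$ on the left and $M$ on the right, so we use the second form:
$$[M_m,L_n^i]=\sum_{s=1}^{i}\binom{i}{s}\,L_n^{i-s}\,\big(-\mathrm{ad}\,L_n\big)^s(M_m).$$
Compute $(-\mathrm{ad}L_n)(M_k)=[M_k,L_n]=-(k-\lambda n+2\mu)M_{k+n}=(\lambda n-k-2\mu)M_{k+n}$. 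Iterating from $k=m$, the $t$-th step ($t=0,\dots,s-1$) acts on $M_{m+tn}$ and contributes the factor $\lambda n-(m+tn)-2\mu=(\lambda-t)n-m-2\mu$. The product over $t$ is exactly the coefficient in \eqref{MLYL1}. The binomial identity itself is proved by induction on $i$: write $xy^{i+1}=(xy^i)y$, and commute each term $y^{i-s}z_s$ past $y$ using $z_s y=y z_s+(-\mathrm{ad}y)(z_s)$. This yields Pascal's recurrence.

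\textbf{The relation \eqref{MLYL2}.} This is identical. From \eqref{def1.1} with $\epsilon=\frac12$,
$$[Y_{k-\frac12},L_n]=-\Big(k-\tfrac{\lambda+1}{2}n+\mu-\tfrac12\Big)Y_{k+n-\frac12}.$$
Applying this with $k=m+tn$ gives the factor $\frac{\lambda+1-2t}{2}n+\frac12-m-\mu$.

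The only real point of care, and where an error is most likely, is the bookkeeping of signs and index shifts in the iterated adjoint, in particular checking that the $\epsilon$-shift gives $+\frac12-\mu$ in the last formula. Everything else is routine.
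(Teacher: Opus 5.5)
Your proof is correct and takes essentially the paper's route: induction on $i$ via the Leibniz rule, with Pascal's rule $\binom{k}{s}+\binom{k}{s-1}=\binom{k+1}{s}$ closing the induction for \eqref{MLYL1} and \eqref{MLYL2}, and your sign and $\epsilon$-shift computations are correct. The only difference is packaging: you prove the general identity $xy^i=\sum_{s}\binom{i}{s}y^{i-s}(-\mathrm{ad}\,y)^s(x)$ once and then read off the coefficients by iterating $[M_k,L_n]=(\lambda n-k-2\mu)M_{k+n}$, whereas the paper runs the same induction directly with the specific products $A_s$.
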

	\begin{proof}
		We prove this lemma by induction on $i\in\Z_+$. For $i=1$, (\ref{YYLM})--(\ref{MLYL2}) hold trivially. Assume the statement is true for some $k \in \Z_+$, i.e., (\ref{YYLM})--(\ref{MLYL2}) hold when $i=k$. Then for $i=k+1\geq 2$, we have the Leibniz rule $[x, yz] = [x,y]z + y[x,z]$ for all $x, y, z \in \mathcal{U}(\Glm)$.  By $Y_{n-\frac{1}{2}}M_{m+n-1}=M_{m+n-1}Y_{n-\frac{1}{2}}$ and assumption, we can deduce that
		\begin{align*}
			[Y_{m-\frac{1}{2}},Y_{n-\frac{1}{2}}^{k+1}]=\, &[Y_{m-\frac{1}{2}},Y_{n-\frac{1}{2}}^{k}]Y_{n-\frac{1}{2}}+Y_{n-\frac{1}{2}}^{k}[Y_{m-\frac{1}{2}},Y_{n-\frac{1}{2}}]\\
			=\,&k(m-n) Y_{n-\frac{1}{2}}^{k-1}M_{m+n-1}Y_{n-\frac{1}{2}}+(m-n)Y_{n-\frac{1}{2}}^{k}M_{m+n-1}\\
			=\,&(k+1)(m-n) Y_{n-\frac{1}{2}}^{k}M_{m+n-1}.
		\end{align*}
		Similarly, one can get $[L_m,M_n^i]=i(n-\lambda m+2 \mu)M_n^{i-1}M_{m+n}$.
		
		For formula (\ref{MLYL1}), we can get
		\begin{align}
			[M_{m},L_{n}^{k+1}]=\,&[M_{m},L_{n}^{k}]L_{n}+L_{n}^{k}[M_{m},L_{n}]\nonumber\\
			=\,&[M_{m},L_{n}^{k}]L_{n}-(m-\la n+2\mu)L_{n}^{k}M_{m+n}.\label{equ3.4}
		\end{align}
		For convenience, let $A_{s} = \prod_{t=0}^{s-1}\big((\lambda-t)n-m-2\mu\big)$. Note that $A_{s} \cdot \big((\lambda-s)n - m - 2\mu\big) = A_{s+1}$. Then by induction hypothesis, (\ref{equ3.4}) turns into:
		\begin{align*}
			[M_m, L_n^{k+1}] = & \sum_{s=1}^{k} \binom{k}{s} A_{s} L_n^{k-s} M_{m+sn} L_n + A_{1} L_n^k M_{m+n}\\
			= & \sum_{s=1}^{k} \binom{k}{s} A_{s} L_n^{k-s} ([M_{m+sn}, L_n] + L_n M_{m+sn}) + A_{1} L_n^k M_{m+n}\\
			= & \sum_{s=1}^{k} \binom{k}{s} A_{s+1} L_n^{k-s} M_{m+(s+1)n} + \sum_{s=1}^{k} \binom{k}{s} A_{s} L_n^{k-s+1} M_{m+sn}+ A_{1} L_n^k M_{m+n}\\
			= & \sum_{s=1}^{k} \binom{k}{s} A_{s+1} L_n^{k-s} M_{m+(s+1)n} + \sum_{s=2}^{k} \binom{k}{s} A_{s} L_n^{k-s+1} M_{m+sn} + (k+1)A_{1} L_n^k M_{m+n} \\
			= & \sum_{s=2}^{k+1} \binom{k}{s-1} A_{s} L_n^{k-s+1} M_{m+sn} + \sum_{s=2}^{k} \binom{k}{s} A_{s} L_n^{k-s+1} M_{m+sn} + (k+1)A_{1} L_n^k M_{m+n}.
		\end{align*}
		Using the property $\binom{k}{s} + \binom{k}{s-1} = \binom{k+1}{s}$, we obtain the form for $i=k+1$.
		The equation (\ref{MLYL2}) can be proved by a similar induction on $i$. Hence the lemma holds.
	\end{proof}
	
	\begin{remark}
		In Lemma \ref{LYM}, we can also write formulae (\ref{MLYL1}) and (\ref{MLYL2}) as follow:
		\begin{equation}\label{MLYL}
			\begin{split}
				[M_m,L_n^i]=&\,i(\lambda n-m-2\mu)L_n^{i-1} M_{m+n}+w_1,\\
				[Y_{m-\frac{1}{2}},L_n^i]=&\,i\Big(\frac{\lambda+1}{2}n+\frac{1}{2}-m-\mu\Big)L_n^{i-1} Y_{m+n-\frac{1}{2}}+w_2,
			\end{split}
		\end{equation}
		where $w_1, w_2 \in\mathcal{U}(\Glm)$ and for any $v\in V$
		$$\mathrm{deg}(w_1 v)\prec \mathrm{deg}(L_n^{i-1} M_{m+n}v),\ \ \mathrm{deg}(w_2 v)\prec \mathrm{deg}(L_n^{i-1} Y_{m+n-\frac{1}{2}}v).$$
	\end{remark}
	
	Now we can summarize the key result for induced  $\Glm$-modules as follows.
	
	\begin{theo}\label{th1}
		Let $d_1,d_2\in\N$ and $V$ be a simple $\Glm^{(d_1,d_2)}$-module. Suppose there exists  $t\in\N$  satisfying the following three conditions:
		
		\noindent
		{\rm (a)} the action of  $M_{t}$ on  $V$ is injective;
		
		\noindent
		{\rm (b)}   $M_iV=Y_{j-\frac{1}{2}}V=L_kV=0$ for  all $i>t,j>t+d_2$ and $k>t+d_1$;
		
		\noindent
		{\rm (c)}  $\big((\lambda+1)i+t+2\mu\big)\big(i+d_1+\lambda (i+t+d_1)-2 \mu\big)\neq 0$, $\forall\, i\in\Z_+$.
		
		\noindent
		Then we have
		
		\noindent
		{\rm (I)} $\mathrm{Ind}(V)$ is a simple $\Glm$-module;
		
		\noindent
		{\rm (II)} the actions of  $M_i,Y_{j-\frac{1}{2}},L_k$ on $\mathrm{Ind}(V)$  for  all $i>t,j>t+d_2$ and $k>t+d_1$  are locally nilpotent.
	\end{theo}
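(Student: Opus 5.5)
The strategy is the standard degree-lowering argument of \cite{MZ,CG}, carried out with the principal total order of Definition \ref{def2.3}. For (I), I will show that every nonzero submodule $W$ of $\mathrm{Ind}(V)$ meets $1\otimes V$ nontrivially. Since $V$ is simple and generates $\mathrm{Ind}(V)$, this gives $W=\mathrm{Ind}(V)$. So take $0\neq w\in W$ of minimal degree $\deg(w)=(\mi,\mj,\mk)$, written as in \eqref{def2.1}. Suppose $(\mi,\mj,\mk)\neq(\mathbf 0,\mathbf 0,\mathbf 0)$. I will exhibit an element $x\in\Glm$ with $xw\neq0$ and $\deg(xw)\prec\deg(w)$, which contradicts minimality.

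The lowering operator depends on which component of the degree is nonzero, and the cases follow the priority of the order.

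\emph{Case $\mk\neq\mathbf 0$.} Let $p$ be the largest index with $k_p\neq0$. Apply $M_{t+p}$. By \eqref{MLYL1}/\eqref{MLYL}, commuting $M_{t+p}$ past $L^{\mk}$ produces the leading term $k_p(\lambda(-p)-(t+p)-2\mu)\,M^{\mi}Y^{\mj}L^{\mk-\epsilon_p}M_t v_{\mi,\mj,\mk}$. I then check that all other terms have strictly smaller degree. The elements $M_{t+p+s(-q)}$ with index $>t$ kill $V$ by (b). Commutators with $M^{\mi}$ and $Y^{\mj}$ vanish or drop into $M$'s. Terms with smaller $\mk$ lie below in $\succ$. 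The coefficient is nonzero by (c): it is the first factor with $i=p$, up to sign. By (a), $M_tv_{\mi,\mj,\mk}\neq0$. The delicate point is controlling the $L$-components coming from other support elements with the same top $\mk$ but different lower entries. This is why the order uses $(\mk,\mathbf w(\mk))$ and the reverse-lexicographic order.

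\emph{Case $\mk=\mathbf 0$, $\mj\neq\mathbf 0$.} Let $q$ be the largest index with $j_q\neq0$, so the factor is $Y_{-d_2-q+\frac12}$. Apply $Y_{t+d_2+q+\frac12}$, whose index is $t+d_2+q+1-\frac12$. By Lemma \ref{LYM}, the leading term is a nonzero multiple of $M^{\mi}Y^{\mj-\epsilon_q}M_t v$. Any $Y_{j-\frac12}$ with $j>t+d_2$ acting on $V$ gives zero. Hence $M_tv\neq0$ again yields a lower degree.

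\emph{Case $\mj=\mk=\mathbf 0$, $\mi\neq\mathbf 0$.} Let $r$ be the largest index with $i_r\neq0$, so the factor is $M_{-d_1-r}$. Apply $L_{t+d_1+r}$. By \eqref{YYLM}, it produces $i_r(-d_1-r-\lambda(t+d_1+r)+2\mu)M^{\mi-\epsilon_r}M_tv$. This coefficient is, up to sign, the second factor in (c) with $i=r$, so it is nonzero. The other terms $L_kV$ with $k>t+d_1$ vanish by (b). This contradicts minimality, so $(\mi,\mj,\mk)=(\mathbf0,\mathbf0,\mathbf0)$ and $w\in V\setminus\{0\}$.

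The main obstacle is the first case: one must verify carefully that commuting $M_{t+p}$ through the full monomials generates no term of degree $\succeq$ the claimed leading term. This requires using $d_1\ge 2d_2-1$ so that the $Y$–$Y$ brackets land in $\Glm^{(d_1,d_2)}$-compatible $M$'s. It may also force switching to $M_{t+p}$ versus a $Y$-operator according to the parity/weight bookkeeping. I will first try the choice $M_{t+p}$ and verify the remainder estimates using the Remark after Lemma \ref{LYM}.

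For (II), the argument is a grading argument. Put a $\frac12\Z$-grading on $\mathrm{Ind}(V)$ by declaring $V$ to be of degree $0$ and letting monomials carry their total negative index. A vector $M^{\mi}Y^{\mj}L^{\mk}v$ has bounded depth. For $x\in\{M_i,Y_{j-\frac12},L_k\}$ in the given ranges, each application of $x$ raises the degree past the $L$-, $Y$- and $M$-parts. Once the accumulated positive index exceeds the depth plus $t+d_1$, condition (b) forces the result to vanish. This holds after commuting to the right, using that $\Glm^{(d_1,d_2)}$ is stable and that the relevant positive elements annihilate $V$. Hence a suitable power $x^N$ kills any given vector, which gives local nilpotence.
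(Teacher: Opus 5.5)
Your plan for (I) is the paper's own degree-lowering argument (Claim \ref{claim3.1}): $M$-operators strip the $L$-part, $Y$-operators strip the $Y$-part, $L$-operators strip the $M$-part, and the two factors of (c) are the leading coefficients in the first and third cases. As written, however, Case 2 fails because of an off-by-one error. Write $Y_{t+d_2+q+\frac12}=Y_{m-\frac12}$ with $m=t+d_2+q+1$, and $Y_{-d_2-q+\frac12}=Y_{n-\frac12}$ with $n=1-d_2-q$. The bracket is then $(m-n)M_{m+n-1}=(m-n)M_{t+1}$, which kills $V$ by (b). For example, your operator annihilates $Y_{-d_2-\frac12}w$ for every $w\in V$. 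You need $Y_{t+d_2+q-\frac12}$ instead (the paper uses $Y_{\hat j+t+d_2-\frac12}$ with $\hat j$ the smallest index). This produces $M_t$ with coefficient $j_q(t+2d_2+2q-1)\ge 1$.

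In Case 1, the comparison you postpone is the whole content of the step. Your worries about $d_1\ge 2d_2-1$, $Y$--$Y$ brackets, or switching to a $Y$-operator are beside the point. Since $M_{t+p}$ commutes with all $M$'s and $Y$'s, $M_{t+p}M^{\mx}Y^{\my}L^{\mz}w=M^{\mx}Y^{\my}[M_{t+p},L^{\mz}]w$. A surviving term simply deletes from $\mz$ a sub-multiset $\mathbf a$ with $\mathbf w(\mathbf a)\ge p$. Your largest-index choice does work:
\begin{itemize}
\item if $\mathbf w(\mz)=\mathbf w(\mk)$ and $\mz\prec\mk$, the first index where they differ is $\le p$, and this forces $\mz-\mathbf a\prec\mk-\epsilon_p$ whenever $\mathbf w(\mathbf a)=p$;
\item for $\mz=\mk$, any $\mathbf a\ne\epsilon_p$ of weight $p$ decreases some $k_s$ with $s<p$.
\end{itemize}
The paper's smallest index $\hat k$ is slightly cleaner, because the weight-$\hat k$ deletion from $\mk$ is then unique.

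Part (II) has a genuine gap. First, there is no $\frac12\Z$-grading of $\mathrm{Ind}(V)$ compatible with the action and with $V$ in degree $0$, because elements of $\Glm^{(d_1,d_2)}$ of nonzero degree can act nontrivially on $V$. Second, the threshold you state is false. Take the Whittaker module of Section 5, with $t=d_1=d_2=1$, $L_2w=\Delta_2w$ and $L_3w=0$. One computes $L_3L_{-1}^2w=12\Delta_1w-8\Delta_2L_{-1}w$ and $L_3^2L_{-1}^2w=32\Delta_2^2w\neq0$ for $\Delta_2\neq0$. Yet the accumulated index $6$ exceeds depth plus $t+d_1$, which is $4$.

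The missing ingredient is a bound on the number of factors:
\begin{itemize}
\item Since $xv=0$, $x^Nu=\mathrm{ad}(x)^N(M^{\mi}Y^{\mj}L^{\mk})v$. This is a combination of products of at most $\ell=\mathbf d(\mi)+\mathbf d(\mj)+\mathbf d(\mk)$ homogeneous elements, of total degree $N\deg x-D$, where $D$ is the depth.
\item PBW-straightening preserves degree and does not increase length. Order the basis with the negative part first and $\Glm^{(d_1,d_2)}$ by increasing degree.
\item The negative part contributes degree $\le 0$. So once $N\deg x-D>\ell(t+d_1)$, the rightmost factor of every ordered monomial has degree $>t+d_1$ and kills $v$ by (b); for the $Y$'s this uses $d_1\ge d_2$.
\end{itemize}
With this repair your (II) becomes a uniform argument, shorter than the paper's. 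The paper instead counts $L$-factors for $M_i$ and $Y_{j-\frac12}$, and handles $L_k$ through Claim \ref{claimnew} and the splitting $N=N_1+N_2+N_3$.
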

	
	\begin{proof}
		In order to prove (I) of Theorem \ref{th1}, we need the following claim.
		\begin{clai}\label{claim3.1}
			For any $v\in\mathrm{Ind}(V)\setminus V$, let $\mathrm{deg}(v)=(\mi,\mj,\mk)$,
			$\tilde{i}=\mathrm{max}\{s:i_s\neq0\}$ if $\mi\neq\mathbf{0}$, ${\hat{j}}=\mathrm{min}\{s:j_s\neq0\}$ if $\mj\neq\mathbf{0}$ and  $\hat{k}=\mathrm{min}\{s:k_s\neq0\}$ if $\mk\neq\mathbf{0}$.  Then we obtain
			
			\noindent
			{\rm (1)}   if $\mk\neq\mathbf{0}$,  then $\hat{k}>0$ and $\mathrm{deg}(M_{\hat{k}+t}v)=(\mi,\mj,\mk^{\prime\prime})$;
			
			\noindent
			{\rm (2)}  if $\mk=\mathbf{0},\mj\neq\mathbf{0}$,  then ${\hat{j}}>0$ and $\mathrm{deg}(Y_{{\hat{j}}+t+d_2-\frac{1}{2}}v)=(\mi,\mj^{\prime\prime},0)$;
			
			\noindent
			{\rm (3)} if $\mj=\mk=\mathbf{0},\mi\neq\mathbf{0}$,  then $\tilde{i}>0$ and $\mathrm{deg}(L_{\tilde{i}+t+d_1}v)=(\mi^{\prime},0,0)$.
		\end{clai}
		To prove this claim, we assume that $v$ is of the form in \eqref{def2.1}.
		
		{\rm (1)} It is enough to show what we want to have  by comparing the degree. Now we consider those $v_{\mx,\my,\mz}$
		with $$M_{\hat{k}+t}M^{\mx}Y^{\my}L^{\mz}v_{\mx,\my,\mz}\neq 0.$$
		Note that $M_{\hat{k}+t}v_{\mx,\my,\mz}=0$ and $Y_{\hat{k}+t+d_2-\frac{1}{2}}v_{\mx,\my,\mz}=0$ for any  $(\mx,\my,\mz)\in \mathrm{supp}(v)$. By (\ref{def1.1}), one can easily check that
		\begin{equation}\label{MYv}
			M_{\hat{k}+t}M^{\mx}Y^{\my}L^{\mz}v_{\mx,\my,\mz}=M^{\mx}Y^{\my}[M_{\hat{k}+t},L^{\mz}]v_{\mx,\my,\mz}.
		\end{equation}
		
		From (a) and (c), we have $M_{t}v_{\mx,\my,\mz}\neq0$.
		If $\mz=\mk$, then by Lemma \ref{LYM}, (\ref{MYv}) turns into
		\begin{equation}\label{MYv2}
			M_{\hat{k}+t}M^{\mx}Y^{\my}L^{\mz}v_{\mx,\my,\mz}=-k_{\hat{k}}\big( (\lambda+1)\hat{k}+t+2\mu\big)M^{\mx}Y^{\my}L^{\mk-\epsilon_{\hat{k}}}M_{t}v_{\mx,\my,\mk}+\mathrm{lower \  terms}.
		\end{equation}
		It is clear that
		$$\mathrm{deg}(M_{\hat{k}+t}M^{\mx}Y^{\my}L^{\mz}v_{\mx,\my,\mz})=
		(\mx,\my,\mk^{\prime\prime})\preceq (\mi,\mj,\mk^{\prime\prime}).$$

		Now we suppose  $(\mz,\mathbf{w}(\mz))\prec(\mk,\mathbf{w}(\mk))$ and denote
		\begin{equation*}
			\aligned
			\mathrm{deg}(M_{\hat{k}+t}M^{\mx}Y^{\my}L^{\mz}v_{\mx,\my,\mz})=(\mx_1,\my_1,\mz_1)\in \mathbf{S}\times \mathbf{S}\times \mathbf{S}.
			\endaligned
		\end{equation*}
		If $\mathbf{w}(\mz)<\mathbf{w}(\mk)$, then we get $\mathbf{w}(\mz_1)\leq \mathbf{w}(\mz)-\hat{k}<\mathbf{w}(\mk)-\hat{k}=\mathbf{w}(\mk^{\prime\prime})$,
		which gives rise to $(\mx_1,\my_1,\mz_1)\prec(\mi,\mj,\mk^{\prime\prime})$.
		
		Then we suppose $\mathbf{w}(\mz)=\mathbf{w}(\mk)$ and ${\mz}\prec{\mk}$. Let $\hat{z}:=\mathrm{min}\{s:z_s\neq0\}>0$.
		If $\hat{z}>\hat{k}$, it is easy to see that $\mathbf{w}(\mz_1)<\mathbf{w}(\mz)-\hat{k}=\mathbf{w}(\mk^{\prime\prime})$. If $\hat{z}=\hat{k}$,
		we can  deduce $(\mx_1,\my_1,\mz_1)=(\mx,\my,\mz^{\prime\prime})$.
		Since $\mz^{\prime\prime}\prec \mk^{\prime\prime}$, we have $\mathrm{deg}(M_{\hat{k}+t}M^{\mx}Y^{\my}L^{\mz}v_{\mx,\my,\mz})
		=(\mx_1,\my_1,\mz_1)\prec(\mi,\mj,\mk^{\prime\prime})$ in both cases.
		
		From above discussion, we conclude that
		$\mathrm{deg}(M_{\hat{k}+t}v)=(\mi,\mj,\mk^{\prime\prime})$.

		{\rm (2)} Now we use   a similar method that appeared  above.  We consider   $v_{\mx,\my,\mathbf{0}}$
		with $$Y_{{\hat{j}}+t+d_2-\frac{1}{2}}M^{\mx}Y^{\my}v_{\mx,\my,\mathbf{0}}\neq0.$$
		Since $Y_{{\hat{j}}+t+d_2-\frac{1}{2}}v_{\mx,\my,\mathbf{0}}=0$ for any  $(\mx,\my,\mathbf{0})\in \mathrm{supp}(v)$,
		then we have
		$$Y_{{\hat{j}}+t+d_2-\frac{1}{2}}M^{\mx}Y^{\my}v_{\mx,\my,\mathbf{0}}=
		M^{\mx}[Y_{{\hat{j}}+t+d_2-\frac{1}{2}},Y^{\my}]v_{\mx,\my,\mathbf{0}}.$$
		Note that $M_tv_{\mx,\my,\mathbf{0}}\neq0$.
		If $\my=\mj$, it is easy to get that
		$$\mathrm{deg}(Y_{{\hat{j}}+t+d_2-\frac{1}{2}}M^{\mx}Y^{\my}v_{\mx,\my,\mathbf{0}})=
		(\mx,\my^{\prime\prime},\mathbf{0})\preceq (\mi,\mj^{\prime\prime},\mathbf{0}),$$
		where the equality holds if and only if $\mx=\mi$.
		
		Suppose $(\my,\mathbf{w}(\my))\prec(\mj,\mathbf{w}(\mj))$, and we can write  $$\mathrm{deg}(Y_{{\hat{j}}+t+d_2-\frac{1}{2}}M^{\mx}Y^{\my}v_{\mx,\my,\mathbf{0}})
		=(\mx_1,\my_1,\mathbf{0})\in \mathbf{S}\times \mathbf{S}\times \mathbf{S}.$$ If $\mathbf{w}(\my)<\mathbf{w}(\mj)$,
		then we get $\mathbf{w}(\my_1)\leq \mathbf{w}(\my)-{\hat{j}}<\mathbf{w}(\mj)-{\hat{j}}=\mathbf{w}(\mj^{\prime\prime})$,
		which shows that $(\mx_1,\my_1,\mathbf{0})\prec(\mi,\mj^{\prime\prime},\mathbf{0})$.
		
		First, we assume $\mathbf{w}(\my)=\mathbf{w}(\mj)$ and ${\my}\prec{\mj}$. Let $\hat{y}:=\mathrm{min}\{s:y_s\neq0\}>0$.
		If $\hat{y}>{\hat{j}}$, we obtain $\mathbf{w}(\my_1)<\mathbf{w}(\my)-{\hat{j}}=\mathbf{w}(\mj^{\prime\prime})$. If $\hat{y}={\hat{j}}$,
		we can similarly check that $(\mx_1,\my_1,\mathbf{0})=(\mx,\my^{\prime\prime},\mathbf{0})$.
		By $\my^{\prime\prime}\prec \mj^{\prime\prime}$, we have $\mathrm{deg}(Y_{{\hat{j}}+t+d_2-\frac{1}{2}}M^{\mx}Y^{\my}v_{\mx,\my,\mathbf{0}})
		=(\mx_1,\my_1,\mathbf{0})\prec(\mi,\mj^{\prime\prime},\mathbf{0})$ in both cases.
		
		Consequently, we conclude that
		$\mathrm{deg}(Y_{{\hat{j}}+t+d_2-\frac{1}{2}}v)=(\mi,\mj^{\prime\prime},\mathbf{0})$.
		
		{\rm (3)} Noticing that $L_{\tilde{i}+t+d_1}V=M_{t+n}V=0$ for $n\in\Z_+$, denote $\tilde{x}:=\mathrm{max}\{s:x_s\neq0\}>0$, then from Lemma \ref{LYM},  we have
		$$L_{\tilde{i}+t+d_1}M^{\mx}v_{\mx,\mathbf{0},\mathbf{0}}=
		-x_{\tilde{x}}\big(\tilde{x}+d_1+\lambda (\tilde{i}+t+d_1)-2 \mu\big)M^{\mx^\prime}M_{t+\tilde{i}-\tilde{x}}v_{\mx,\mathbf{0},\mathbf{0}}.$$
		By the definition of $\tilde{i}$, we get $\tilde{x}\leq\tilde{i}$. Then from {\rm (c)} we have $\mathrm{deg}(L_{\tilde{i}+t+d_1}v)\preceq(\mi^{\prime},0,0)$, where the equality holds if and only if $\mx=\mi$. This proves Claim \ref{claim3.1}.
		
		Using Claim \ref{claim3.1} repeatedly, from any nonzero element $v\in\mathrm{Ind}(V)$ we can reach a nonzero element in
		$\mathcal{U}(\Glm)v\cap V\neq0$, which indicates the simplicity of $\mathrm{Ind}(V)$.

		Next, we prove (II) of Theorem \ref{th1}.
		We  consider those
		$v_{\mx,\my,\mz}$ in \eqref{def2.1} with
		$M_{i}M^{\mx}Y^{\my}L^{\mz}
		$ $v_{\mx,\my,\mz}\neq0,
		Y_{j-\frac{1}{2}}M^{\mx}Y^{\my}
		L^{\mz}v_{\mx,\my,\mz}\neq0$ and $
		L_{k}M^{\mx}
		Y^{\my}L^{\mz}v_{\mx,\my,\mz}\neq0$
		for $i>t,j>t+d_2$ and $k>t+d_1$, then $L^{\mz}\neq 0$, hence we can assume $\tilde{z}:=\mathrm{max}\{s:z_s\neq0\}>0$.
		For $M_{i}$, we have
		\begin{eqnarray*}
			M_{i}M^{\mx}Y^{\my}L^{\mz}v_{\mx,\my,\mz}
			&=&
			\sum_{{\alpha}=1}^{\tilde{z}} \sum_{{\beta}=1}^{z_\alpha }M^{\mx}Y^{\my}L_{-\tilde{z}}^{z_{\tilde{z}}} \cdots L_{-{\alpha}}^{z_{\alpha}-\beta} [M_i,L_{-{\alpha}}]L_{-{\alpha}}^{\beta-1}\cdots L_{-1}^{z_1}v_{\mx,\my,\mz}
			\\&=&
			\sum_{{\alpha}=1}^{\tilde{z}}\sum_{{\beta}=1}^{z_\alpha }-(i+\la \alpha+2\mu)M^{\mx}Y^{\my}L_{-\tilde{z}}^{z_{\tilde{z}}} \cdots L_{-{\alpha}}^{z_{\alpha}-\beta} M_{i-{\alpha}}L_{-{\alpha}}^{\beta-1}\cdots L_{-1}^{z_1}v_{\mx,\my,\mz}.
		\end{eqnarray*}
		Counting the sum of degrees of all $L_{-{\alpha}}$, where $\alpha=1,\ldots,\tilde{z}$, one can find it decreases from $\mathbf{d}(\mz)$ to $\mathbf{d}(\mz)-1$. Repeating the above step, we have $M_i^{\mathbf{d}(\mz)+1}M^{\mx}Y^{\my}L^{\mz}v_{\mx,\my,\mz}=0$.
		It is clear that $M_i$ acts locally
		nilpotently on $\mathrm{Ind}(V)$ for $i>t$. Similarly, we obtain that $Y_{j-\frac{1}{2}}^{\mathbf{d}(\my)+2\mathbf{d}(\mz)+1}M^{\mx}Y^{\my}L^{\mz}v_{\mx,\my,\mz}=0$, which means $Y_{j-\frac{1}{2}}$ acts
		locally nilpotently on $\mathrm{Ind}(V)$ for $j>t + d_2$. For $L_k$ and any $v\in V$, we first prove the following claim.
		\begin{clai}\label{claimnew}
			%$v\in V$, $\mz\in\mathbf{S}$ and
			For any $n\in \Z_+$, $
			L_{\mathbf{w}(\mz)+t+d_1+n}L^{\mz}v=
			Y_{\mathbf{w}(\mz)+t+d_2+n-\frac{1}{2}}L^{\mz}v=
			M_{\mathbf{w}(\mz)+t+n}L^{\mz}v=0$.
		\end{clai}
		
		We prove this by induction on $\mathbf{d}(\mz)$. For $\mathbf{d}(\mz)=1$, $L^{\mz}v$ turns into $L_{-\alpha}v$ for some $\alpha\in\Z_+$, then $\mathbf{w}(\mz)=\alpha$. By (\ref{def1.1}) and {\rm (b)}, we have
		\begin{eqnarray*}
			L_{\alpha+t+d_1+n}L_{-\alpha}v&=&([L_{\alpha+t+d_1+n},L_{-\alpha}]+L_{-\alpha}L_{\alpha+t+d_1+n})v\\
			&=&-(2\alpha+t+d_1+n)L_{t+d_1+n}v=0,\\
			Y_{\alpha+t+d_2+n-\frac{1}{2}}L_{-\alpha}v&=&([Y_{\alpha+t+d_2+n-\frac{1}{2}},L_{-\alpha}]+L_{-\alpha}Y_{\alpha+t+d_2+n-\frac{1}{2}})v\\
			&=&-(\frac{\la+3}{2}\alpha+t+d_2+n+\mu-\frac{1}{2})Y_{t+d_2+n-\frac{1}{2}}v=0,\\
			M_{\alpha+t+n}L_{-\alpha}v&=&([M_{\alpha+t+n},L_{-\alpha}]+L_{-\alpha}M_{\alpha+t+n})v\\
			&=&-\big((1+\la)\alpha+t+n+2\mu\big)M_{t+n}v=0.
		\end{eqnarray*}
		Now we assume that $\mathbf{d}(\mz)>1$ and the claim holds for $\mathbf{d}(\mz)-1$. Note that $\mathbf{d}(\mz^{\prime})=\mathbf{d}(\mz)-1$. Then for $L_{\mathbf{w}(\mz)+t+d_1+n}L^{\mz}v$, it follows that
		\begin{eqnarray*}
			L_{\mathbf{w}(\mz)+t+d_1+n}L^{\mz}v
			&=&L_{\mathbf{w}(\mz)+t+d_1+n}L_{-\tilde{z}}L^{\mz^{\prime}}v\\
			&=&([L_{\mathbf{w}(\mz)+t+d_1+n},L_{-\tilde{z}}]+L_{-\tilde{z}}L_{\mathbf{w}(\mz)+t+d_1+n})L^{\mz^{\prime}}v,\\
			&=&-(\tilde{z}+\mathbf{w}(\mz)+t+d_1+n)L_{\mathbf{w}(\mz^{\prime})+t+d_1+n}L^{\mz^{\prime}}v+L_{-\tilde{z}}L_{\mathbf{w}(\mz^{\prime})+t+d_1+n+\tilde{z}}L^{\mz^{\prime}}v.
		\end{eqnarray*}
		By the induction hypothesis, we know $L_{\mathbf{w}(\mz^{\prime})+t+d_1+n}L^{\mz^{\prime}}v=L_{\mathbf{w}(\mz^{\prime})+t+d_1+n+\tilde{z}}L^{\mz^{\prime}}v=0$, which leads to $L_{\mathbf{w}(\mz)+t+d_1+n}L^{\mz}v=0$. Similarly, we can obtain that $Y_{\mathbf{w}(\mz)+t+d_2+n-\frac{1}{2}}L^{\mz}v$ $=
		M_{\mathbf{w}(\mz)+t+n}$ $L^{\mz}v =0$. Therefore, Claim \ref{claimnew} has been proved.
		%Besides, from the proof above, it is not difficult to verify if $L_{n_1}L^{\mz}v=Y_{n_2-\frac{1}{2}}L^{\mz}v=M_{n_3}L^{\mz}v=0$, then $L_{n_1}L_{i}L^{\mz}v=Y_{n_2-\frac{1}{2}}L_{i}L^{\mz}v=M_{n_3}L_{i}L^{\mz}v=0$ for all $i\in\N$.
		
		Denote $\varphi:=\mathrm{ad}(L_k)$, then
		\begin{eqnarray}\label{N123}
			L_{k}^{N}M^{\mx}Y^{\my}L^{\mz}v_{\mx,\my,\mz}
			&=&
			\sum_{\substack{N_1,N_2,N_3\in\N,\\N_1+N_2+N_3=N}}\varphi^{N_1}(M^{\mx})\varphi^{N_2}(Y^{\my})\varphi^{N_3}(L^{\mz})v_{\mx,\my,\mz},
			%\\&=&
			%\sum_{{\alpha}=1}^{\tilde{z}}\sum_{{\beta}=1}^{z_\alpha }-(i+\la \alpha+2\mu)M^{\mx}Y^{\my}L_{-\tilde{z}}^{z_{\tilde{z}}} \ldots L_{-{\alpha}}^{z_{\alpha}-\beta} M_{i-{\alpha}}L_{-{\alpha}}^{\beta-1}\ldots L_{-1}^{z_1}v_{\mx,\my,\mz},
		\end{eqnarray}
		where $N\in\Z_+$. Taking $kN>\big(2\mathbf{w}(\mz)+t+d_1\big)\mathbf{d}(\mx)
		+\big(\mathbf{w}(\mathbf{y+z})+t+2d_2+1\big)\mathbf{d}(\my)
		+\big(\mathbf{w}(\mz)+t+d_1\big)\mathbf{d}(\mz)
		+\mathbf{w}(\mx+\my+\mz)$, for $N_1,N_2,N_3\in\N$ in (\ref{N123}), we can deduce that at least one of the following inequalities holds
		\begin{eqnarray}
			\label{N1}
			kN_1&>&\big(2\mathbf{w}(\mz)+t+d_1\big)\mathbf{d}(\mx)+\mathbf{w}(\mx),\\
			\label{N2}
			kN_2&>&\big(\mathbf{w}(\mathbf{y+z})+t+2d_2+1\big)\mathbf{d}(\my)+\mathbf{w}(\my),\\
			\label{N3}
			kN_3&>&\big(\mathbf{w}(\mz)+t+d_1\big)\mathbf{d}(\mz)+\mathbf{w}(\mz).
		\end{eqnarray}
		
		If (\ref{N3}) holds, we have
		$$\varphi^{N_3}(L^{\mz})v_{\mx,\my,\mz}
		=\varphi^{N_3}(L_{-\alpha_{\mathbf{d}(\mz)}}\cdots L_{-\alpha_2} L_{-\alpha_1})v_{\mx,\my,\mz}
		=L_{\beta_{\mathbf{d}(\mz)}}\cdots L_{\beta_2} L_{\beta_1}v_{\mx,\my,\mz},$$
		where $1\leq \alpha_1\leq \alpha_2\leq\cdots \leq\alpha_{\mathbf{d}(\mz)}$ and $\sum_{i=1}^{\mathbf{d}(\mz)}\alpha_{i}=\mathbf{w}(\mz)$. One can check
		
		$$\sum_{i=1}^{\mathbf{d}(\mz)}\beta_{i}=kN_3-\sum_{i=1}^{\mathbf{d}(\mz)}\alpha_{i}>\big(\mathbf{w}(\mz)+t+d_1\big)\mathbf{d}(\mz).$$
		Then, there exists $i'\in\{1,2,\cdots,\mathbf{d}(\mz)\}$ such that $L_{\beta_{i'}}>\mathbf{w}(\mz)+t+d_1$. It follows from Claim \ref{claimnew} that $\varphi^{N_3}(L^{\mz})v_{\mx,\my,\mz}
		=0$. Similarly, one has $\varphi^{N_1}(M^{\mx})\varphi^{N_2}(Y^{\my})\varphi^{N_3}(L^{\mz})v_{\mx,\my,\mz}=0$ if  (\ref{N1}) holds.
		
		If (\ref{N2}) holds, we can get
		\begin{eqnarray*}
			\varphi^{N_2}(Y^{\my})\varphi^{N_3}(L^{\mz})v_{\mx,\my,\mz}
			&=&\varphi^{N_2}(Y_{-\alpha_{\mathbf{d}(\my)}}\cdots Y_{-\alpha_2} Y_{-\alpha_1})\varphi^{N_3}(L^{\mz})v_{\mx,\my,\mz}\\
			&=&Y_{\beta_{\mathbf{d}(\my)}}\cdots Y_{\beta_2} Y_{\beta_1}\varphi^{N_3}(L^{\mz})v_{\mx,\my,\mz},
		\end{eqnarray*}
		where $d_2+\frac{1}{2}\leq \alpha_1\leq \alpha_2\leq\cdots \leq\alpha_{\mathbf{d}(\my)}$ and $\sum_{i=1}^{\mathbf{d}(\mz)}\alpha_{i}=\mathbf{w}(\my)+(d_2+\frac{1}{2})\mathbf{d}(\my)$. We obtain
		$$\sum_{i=1}^{\mathbf{d}(\my)}\beta_{i}=kN_2-\sum_{i=1}^{\mathbf{d}(\my)}\alpha_{i}>\big(\mathbf{w}(\mathbf{y+z})+t+d_2+\frac{1}{2}\big)\mathbf{d}(\my).$$
		Then, there exists $i'\in\{1,2,\cdots,\mathbf{d}(\my)\}$ such that $Y_{\beta_{i'}}>\mathbf{w}(\my+\mz)+t+d_2+\frac{1}{2}$. If $i'=1$, by Claim \ref{claimnew}, $\varphi^{N_2}(Y^{\my})\varphi^{N_3}(L^{\mz})v_{\mx,\my,\mz}
		=0$. If $i'\geq 2$,
		\begin{eqnarray*}
			Y_{\beta_{i'}}Y_{\beta_{i'-1}}\cdots  Y_{\beta_1}\varphi^{N_3}(L^{\mz})v_{\mx,\my,\mz}            &=&\sum_{j=1}^{i'-1}Y_{\beta_{i'-1}}\cdots[Y_{\beta_{i'}},Y_{\beta_{j}}]\cdots Y_{\beta_1}\varphi^{N_3}(L^{\mz})v_{\mx,\my,\mz}\\
			&=&\sum_{j=1}^{i'-1}(i'-j)Y_{\beta_{i'-1}}\cdots M_{\beta_{i'}+\beta_{j}}\cdots Y_{\beta_1}\varphi^{N_3}(L^{\mz})v_{\mx,\my,\mz}.
		\end{eqnarray*}
		Note that $\beta_{j}\geq-\alpha_{\mathbf{d}(\my)}\geq -\mathbf{w}(\mathbf{y})-d_2-\frac{1}{2}$. We can deduce that $\beta_{i'}+\beta_{j}> \mathbf{w}(\mathbf{z})+t$. By Claim \ref{claimnew}, $\varphi^{N_2}(Y^{\my})\varphi^{N_3}(L^{\mz})v_{\mx,\my,\mz}
		=0$.
		
		In conclusion, $L_k$ acts
		locally nilpotently on $\mathrm{Ind}(V)$ for $k >t + d_1$.
		The proof of this theorem is completed.
	\end{proof}
	
	\begin{rema}\label{rema3.3} \rm
		In Theorem \ref{th1},
		%when $t=0$, the condition ${\rm (a)}$ is equivalent to that $\nu_0\neq 0$. In addition,
		from the above proof, we see that Claim \ref{claim3.1} also holds without the assumption of the simplicity of $V$ as a $\Glm^{(d_1,d_2)}$-module.
	\end{rema}
	
	Moreover, we have the following corollary.
	\begin{coro}
		Letting $d_1,d_2$ and $V$ as in Theorem \ref{th1} except that $V$ may not be simple over $\Glm^{(d_1,d_2)}$, then we have
		$$V=\{v\in\mathrm{Ind}(V)\, |\, M_iv=Y_{j-\frac{1}{2}}v=L_kv=0,\quad \forall \, i>t,j>t+d_2,k>t+d_1\}.$$
	\end{coro}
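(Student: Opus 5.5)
The inclusion ``$\subseteq$'' is immediate from hypothesis (b), since $V$ sits inside $\mathrm{Ind}(V)$ as $1\otimes V$. The content is the reverse inclusion. Let
$$W=\{v\in\mathrm{Ind}(V)\mid M_iv=Y_{j-\frac12}v=L_kv=0,\ \forall\, i>t,\ j>t+d_2,\ k>t+d_1\}.$$
Suppose some $v\in W\setminus V$. Using Claim~\ref{claim3.1}, which by Remark~\ref{rema3.3} does not need $V$ to be simple, we will produce a particular element of the annihilating set that acts nontrivially on $v$. This contradicts $v\in W$.

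Write $v$ in the PBW form \eqref{def2.1} with $\deg(v)=(\mi,\mj,\mk)$. Since $v\notin V$, at least one of $\mi,\mj,\mk$ is nonzero, and we take the first applicable case of Claim~\ref{claim3.1}.
\begin{itemize}
\item If $\mk\neq\mathbf 0$, then $\hat k\geq 1$, so $\hat k+t>t$. Hence $M_{\hat k+t}$ annihilates $v$ because $v\in W$. But Claim~\ref{claim3.1}(1) says $\deg(M_{\hat k+t}v)=(\mi,\mj,\mk'')$; in particular $M_{\hat k+t}v\neq 0$, a contradiction.
\item If $\mk=\mathbf 0$ and $\mj\neq\mathbf 0$, then $\hat j+t+d_2>t+d_2$. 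Claim~\ref{claim3.1}(2) gives $Y_{\hat j+t+d_2-\frac12}v\neq0$, again contradicting $v\in W$.
\item If $\mj=\mk=\mathbf 0$ and $\mi\neq\mathbf 0$, then $\tilde i+t+d_1>t+d_1$. Claim~\ref{claim3.1}(3) gives $L_{\tilde i+t+d_1}v\neq 0$, contradicting $v\in W$.
\end{itemize}
Thus $\mi=\mj=\mk=\mathbf 0$, i.e.\ $v\in V$, so $W\subseteq V$.

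The main point to verify is that Claim~\ref{claim3.1} really holds without simplicity of $V$. Its proof only uses three ingredients: injectivity of $M_t$ on $V$ (hypothesis (a)), the vanishing conditions (b), and the nonvanishing of the scalar factors (c). These guarantee that the leading coefficients $-k_{\hat k}\big((\la+1)\hat k+t+2\mu\big)M_t v_{\mi,\mj,\mk}$, the analogous $Y$-coefficient, and $-x_{\tilde x}(\cdots)M_t v_{\mi,\mathbf 0,\mathbf 0}$ are nonzero. Simplicity never enters, so Remark~\ref{rema3.3} applies as stated.

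One detail needs a little care. In case (2), the coefficient of the leading term is a multiple of $M_t$ applied to $v_{\mi,\mj,\mathbf 0}$, and it must be nonzero. We will check that the relevant scalar, coming from $[Y_{\hat j+t+d_2-\frac12},Y_{-\hat j-d_2-\frac12}]$, is nonzero. By the bracket $[Y_{m-\frac12},Y_{n-\frac12}]=(m-n)M_{m+n-1}$ with $m=\hat j+t+d_2$ and $n=-\hat j-d_2$, this scalar equals $2\hat j+t+2d_2>0$. So it is automatically nonzero, and (a) finishes the argument.
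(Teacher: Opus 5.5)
Your argument is correct and is exactly how the paper obtains this corollary: it is stated right after Remark~\ref{rema3.3} as an immediate consequence of Claim~\ref{claim3.1}, which for any $v\in\mathrm{Ind}(V)\setminus V$ produces one of $M_{\hat k+t}$, $Y_{\hat j+t+d_2-\frac12}$, $L_{\tilde i+t+d_1}$ (all in the annihilating set) acting nonzero on $v$. One small slip in your extra check: $j_s$ is the exponent of $Y_{-d_2-s+\frac12}$, so the relevant bracket is $[Y_{\hat j+t+d_2-\frac12},Y_{-\hat j-d_2+\frac12}]=(2\hat j+t+2d_2-1)M_t$ (your indexing would produce $M_{t-1}$, not $M_t$), but this scalar is still $\geq 1$, so the conclusion is unaffected.
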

	
	%\begin{rema}\label{rema3.4}
	%Note that in  Theorem \ref{th1}, $t$ is the largest integer among all $l$ such that $M_l$ acts nontrivially on $V$, while $t+d$ is not necessarily the largest integer among all $l$ such that $Y_{l-\frac{1}{2}},L_l$ acts nontrivially on $V$.
	%If we take $d=0$ in the theorem \ref{th1}, the resulted modules for $t=0$ are simple highest weight modules,
	%which reobtains the simplicity of the Verma modules in the \cite{TZ} for a special case.
	%\end{rema}
	Recall the \emph{Virasoro algebra} is $\mathrm{Vir}:=\mathrm{Span}_{\C}\{L_m,C \,|\, m\in\Z\}$ with the following brackets
	$$[L_m,L_n]= (n-m)L_{m+n}+\delta_{m+n,0}\frac{m^{3}-m}{12}C,\quad [L_m,C]=0,\quad \forall\, m,n\in\Z.$$
	To prove the next theorem, we first introduce the following lemma.
	\begin{lemm}[\cite{MNTZ}]\label{le3.7}
		Let $V$ be a $\mathrm{Vir}$-module such that $L_{k}$ acts locally finitely on $V$ for some $k \in \Z_+$. Then, there exist $0 \neq v \in V$ and $N \in \Z_+$ such that
		$$
		L_{n} v=0,\quad \forall \, \,  n \geq N.
		$$
	\end{lemm}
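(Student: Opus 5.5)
This is the lemma from \cite{MNTZ} (Lemma \ref{le3.7}), so within this paper it is cited rather than proved. If I were to prove it myself, I would reduce to finite-dimensional linear algebra, using the fact that $L_k$ acts locally finitely.

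First, pick any nonzero $w\in V$. Let $W=\sum_{n\in\N}\C L_k^n w$, which is finite dimensional by hypothesis and $L_k$-stable. Then $L_k$ has an eigenvector $v\in W$ with $L_kv=av$ for some $a\in\C$. The goal is to show that a suitable vector built from $v$ is annihilated by $L_n$ for all large $n$.

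The key step is to consider, for each $m\geq 1$, the finite-dimensional subspace
$$U=\sum_{r,s\in\N}\C L_k^rL_{m}^s v',$$
or more precisely to show that the space $V_0=\{u\in V : \dim\sum_{n}\C L_k^n u<\infty\}$ is stable under the operators $L_{j}$. Stability holds because $\mathrm{ad}\,L_k$ acts on $\mathrm{Span}\{L_j\}$ with shifts by $k$. I would then use generalized eigenspaces of $L_k$ on $V_0$. The relation $[L_k,L_j]=(j-k)L_{j+k}$ lets me compare $L_k$-weights as follows. Write $\mathrm{ad}L_k$ acting on the operator $L_j$ and expand $(L_k-a)^N L_j v$ in the form $\sum\binom{N}{i}c_i L_{j+ik}(L_k-a)^{N-i}v$. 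This gives the identity $(L_k-a)^N L_j v=\prod_{i=0}^{N-1}(j+ik-k)\,L_{j+Nk}v$ for an exact eigenvector $v$.

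Local finiteness on $L_jv$ then forces a polynomial relation $\sum_{N}c_N(L_k-a)^N L_jv=0$. For $j$ not congruent to $0$ modulo $k$ the coefficients $\prod(j+(i-1)k)$ never vanish, so this expresses $L_{j+Nk}v$ through lower ones. A standard trick, bracketing this relation with $L_{k}$ repeatedly and using the growing degree, shows that $L_{j+Nk}v=0$ for $N\gg0$, uniformly in each residue class. Doing this for the finitely many residues modulo $k$ yields an $N$ such that $L_nv=0$ for all $n\geq N$.

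The main obstacle is exactly this last step: turning the linear relation among the $L_{j+Nk}v$ into genuine vanishing. A cleaner route, which I would try first, is the Mathieu--Zhao type argument. Let $\mathrm{Vir}_+ $ act on the finite-dimensional span generated by $v$ under $L_k$ and $L_{k+1}$. These two elements generate $\mathrm{Vir}_{\geq 2k}$ up to finite codimension, and the argument produces a finite-dimensional module over a subalgebra of finite codimension in $\mathrm{Vir}_+$. Any finite-dimensional module over such a subalgebra is annihilated by $L_n$ for $n$ large, since the subalgebra has no nontrivial finite-dimensional quotients beyond a nilpotent one. Taking a nonzero vector in it finishes the proof.
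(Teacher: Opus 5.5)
The paper gives no proof of this lemma (it is quoted from \cite{MNTZ}), so your argument has to stand on its own. It has a genuine gap exactly where you locate the ``main obstacle''.

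Your set-up is sound. An eigenvector $v$ with $L_kv=av$ exists, and the identity $(L_k-a)^NL_jv=\prod_{i=0}^{N-1}(j+(i-1)k)\,L_{j+Nk}v$ is correct. It shows that $W=\mathrm{span}\{L_mv : m>k\}$ is finite-dimensional and $L_k$-stable. The detour through $V_0$ is vacuous, since $V_0=V$ by hypothesis, and the finite-dimensionality of $\sum_{r,s}\C L_k^rL_m^sv'$ is not justified.

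No ``standard trick'' using $L_k$ alone can finish the proof. Applying $L_k$ to $\sum_i c_iL_{j+(s+i)k}v=0$ only returns the shifted relation $\sum_i c_i(j+(s+i-1)k)L_{j+(s+i+1)k}v=0$. All such relations are satisfied by the model $W=\C w$, $L_mv=\gamma_mw$, $(L_k-a)w=\beta w$, $\gamma_{m+k}=\beta\gamma_m/(m-k)$, with $\beta\neq0$ and $\gamma_{k+1},\dots,\gamma_{2k}\neq0$, and in this model no $L_mv$ vanishes. So brackets $[L_m,L_{m'}]$ with $m,m'\neq k$ must enter somewhere. The similar-looking step in the proof of Theorem \ref{th2} (apply $L_{st}$ and compare with the relation at $2s$) uses $L_{st}v=0$, which is the conclusion of this lemma, so it cannot be borrowed.

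Your second route assumes that $v$ generates a finite-dimensional space under $L_k$ and $L_{k+1}$, i.e.\ that $L_{k+1}$ acts locally finitely. That is not a hypothesis, and avoiding it is the whole point of the lemma. Moreover, a finite-codimensional subalgebra of $\mathrm{Vir}_+$ need not contain any $L_n$. For example, with $L_n=t^{n+1}\frac{d}{dt}$, the fields in $t^2\C[t]\frac{d}{dt}$ vanishing to second order at $t=1$ form such a subalgebra. So your final claim needs the grading and an actual proof.

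Here is one way to bring the missing brackets in. Let $\mathfrak a=\{x\in\mathrm{span}\{L_m : m>k\} : xv=0\}$.
\begin{itemize}
\item $\mathfrak a$ is a Lie subalgebra, since annihilators always are.
\item It is $\mathrm{ad}L_k$-stable because $L_kv=av$.
\item It has finite codimension because $W$ is finite-dimensional.
\end{itemize}
Let $p$ be the minimal polynomial of $L_k-a$ on $W$. Then $X_m:=p(\mathrm{ad}L_k)(L_m)\in\mathfrak a$ for all $m>k$, hence also $[X_m,X_{m'}]\in\mathfrak a$. What must be shown is that this forces $p(x)=x^n$. Once that holds, $X_m=\prod_{i=0}^{n-1}(m+(i-1)k)\,L_{m+nk}$, so $L_{m+nk}v=0$ for all $m>k$.

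For $p(x)=x-\beta$ the mechanism is visible. Modulo $\mathrm{span}\{X_r : r>k\}$ one finds
\[ [X_m,X_{m'}]\equiv\frac{(m'-m)(m-k)(m'-k)}{(m+m')(m+m'-k)}\,\beta^2L_{m+m'}. \]
So $\beta\neq0$ gives $L_{m+m'}v=0$ for all $k<m<m'$. The relations $X_rv=0$ then propagate this down to $W=0$, contradicting the minimality of $p$. Carrying this out for general $p$ is where the real work of the lemma lies, and it is absent from your proposal.
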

	
	Now we are ready to state the second main result of this section,  which improves the results over  the Schr\"{o}dinger-Virasoro algebra (see  \cite[Theorem 3.4]{CHS} ) and  $\mathfrak{bms}_3$ algebra (see \cite[Theorem 2]{Cqf}). Noting that in (1) and (2) of the following theorem, we only require the locally finite and locally nilpotent conditions of $L_t$ on $P$.
	\begin{theo}\label{th2}
		Let $P$ be a simple  $\Glm$-module. The following conditions are equivalent:
		
		\noindent
		{\rm (1)}  There exists $t\in\Z_+$ such that the action of $L_t$ on $P$ is locally finite.
		
		\noindent
		{\rm (2)}  There exists $t\in\Z_+$ such that the action of $L_t$ on $P$ is locally nilpotent.
		
		\noindent
		{\rm (3)}  There exist $x,y,z\in\Z$ such that  $P$ is a  locally finite $\Glm^{(x,y,z)}$-module.
		
		\noindent
		{\rm (4)}    There exist $x,y,z\in\Z$ such that  $P$ is a  locally nilpotent $\Glm^{(x,y,z)}$-module.
		
		\noindent
		{\rm (5)} $P$ is restricted.
	\end{theo}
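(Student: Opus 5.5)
We take $\Glm^{(x,y,z)}$ to mean the subalgebra spanned by $M_i$, $Y_{j-\frac{1}{2}}$, $L_k$ with $i\geq x$, $j\geq y$, $k\geq z$, together with $C$. With this reading, we prove the cycle $(5)\Rightarrow(4)\Rightarrow(3)\Rightarrow(1)$, together with $(4)\Rightarrow(2)\Rightarrow(1)$; the only substantial implication is $(1)\Rightarrow(5)$.

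\emph{Easy implications.} For $(5)\Rightarrow(4)$: since $P$ is simple, $P=\mathcal{U}(\Glm)v$ for any $0\neq v$. A restricted vector $v$ is killed by $M_i,Y_{j-\frac12},L_k$ for all large indices. By the PBW argument of Claim \ref{claimnew}, every vector $uv$ with $u\in\mathcal{U}(\Glm)$ is likewise killed by all sufficiently high elements. Taking $x,y,z$ large, a degree-counting argument as in part (II) of Theorem \ref{th1} (positive total degree eventually exceeds the bound) gives local nilpotence of $\Glm^{(x,y,z)}$. The implications $(4)\Rightarrow(3)$ and $(2)\Rightarrow(1)$ are immediate from the remarks after the definition of local nilpotence; note that $\Glm^{(x,y,z)}$ is finitely generated for positive $x,y,z$. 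For $(4)\Rightarrow(2)$ and $(3)\Rightarrow(1)$, simply take $t\geq z$, $t>0$.

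\emph{Main implication $(1)\Rightarrow(5)$.} First restrict $P$ to $\mathrm{Vir}$ and apply Lemma \ref{le3.7}. This gives $0\neq w\in P$ and $N$ with $L_nw=0$ for all $n\geq N$. Put $P_N=\{u\in P\mid L_nu=0,\ \forall n\geq N\}\neq0$.

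The next goal is a nonzero vector of $P$ that is also annihilated by all $M_m$ and $Y_{m-\frac12}$ for $m\gg0$. For $u\in P_N$ and $n\geq N$ we have
$$L_nM_mu=(m-\lambda n+2\mu)M_{m+n}u,\qquad L_nY_{m-\frac12}u=\Big(m-\tfrac{\lambda+1}{2}n+\mu-\tfrac12\Big)Y_{m+n-\frac12}u.$$
The normalization $\Re(\mu)\in[-1,0)$ from Remark \ref{rema1} is used to keep these coefficients nonzero for the relevant $m,n$. Hence a vanishing $M_{m_0}u=0$ (respectively $M_{m_0}u=0$ on a whole residue window of length $N$) propagates to $M_mu=0$ for all $m\geq m_0$. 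To produce such a vanishing, use local finiteness of $L_t$ on the vector $M_mu$. The space spanned by $L_t^kM_mu$ is finite dimensional. Expanding $L_t^kM_mu$ by formula (\ref{MLYL1}) and using $L_tu=0$ when $t\geq N$ (otherwise replace $L_t$ by $L_{t}$-powers combined with the $L_n$, $n\geq N$), this space contains the vectors $c_k M_{m+kt}u$ with nonzero scalars $c_k$. So the vectors $M_{m+kt}u$, $k\geq0$, span a finite-dimensional space. A linear dependence among them, pushed by suitable $L_n$ ($n\geq N$) and combined with the recursion above, should force $M_{m'}u=0$ for all large $m'$.

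The same argument applies to the $Y$'s, using $[Y_{m-\frac12},Y_{n-\frac12}]=(m-n)M_{m+n-1}$ and the vanishing of high $M$'s. The resulting set of vectors killed by all high $L,M,Y$ is nonzero. By simplicity and Claim \ref{claimnew}, every vector of $P$ is then restricted.

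I expect the main obstacle to be turning the finite-dimensionality of $\mathrm{span}\{M_{m+kt}u\}$ into actual vanishing of $M_{m'}u$ for large $m'$, rather than merely a linear recurrence. I would first try applying the operators $L_n$, $n\geq N$, to a minimal-length dependence relation $\sum_k a_kM_{m+kt}u=0$. Each application shifts all indices by $n$ and rescales the terms by distinct linear factors in $n$. Choosing two different $n$ and subtracting should shorten the relation, until a single term remains; that term then vanishes, and the propagation above finishes the argument. Condition (c)-type genericity, secured by $\Re(\mu)\in[-1,0)$, is what guarantees that these factors are distinct and nonzero.
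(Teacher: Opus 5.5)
Your outline follows the paper's skeleton: Lemma \ref{le3.7}, a finite-dimensional span of vectors $M_{m+kt}u$, a minimal linear relation shortened by high $L_n$, then the $Y$'s and a grading argument. However, the core of $(1)\Rightarrow(5)$ has two genuine gaps.

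\emph{First gap: the finite-dimensional span.} $\mathcal{U}(\C L_t)M_mu$ is spanned by the vectors $M_{m+kt}u$ only if $L_tu\in\C u$. Here $t$ is fixed by hypothesis, while $N$ is produced afterwards by Lemma \ref{le3.7}. So in general $t<N$ and $L_tu\neq0$, and then $L_t^kM_mu$ is a combination of the vectors $M_{m+st}L_t^{k-s}u$. Your fallback (``replace $L_t$ by $L_t$-powers combined with the $L_n$'') is not an argument. The missing observation is that $P_N$ is $L_t$-stable, since $L_nL_tu=L_tL_nu+(t-n)L_{n+t}u=0$ for $n\geq N$. Local finiteness then gives $u\in P_N$ with $L_tu=\omega u$, so $(L_t-\omega)M_mu=(m-\la t+2\mu)M_{m+t}u$. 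Once $m=j+st$ with $s$ beyond the at most one zero of these coefficients, you get $\mathcal{U}(\C L_t)M_{j+st}u=\sum_{k\in\N}\C M_{j+(s+k)t}u$. Note that the normalization $\Re(\mu)\in[-1,0)$ does not make such coefficients nonzero: for $\la=1$, $\mu=-\frac12$ one has $m-\la n+2\mu=0$ whenever $m=n+1$. Zeros are avoided only by taking indices large.

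\emph{Second gap: the shortening step.} As described, it fails. Applying $L_n$ and $L_{n'}$ with $n\neq n'$ to one relation $\sum_ka_kM_{m+kt}u=0$ gives relations supported on the different families $\{M_{m+n+kt}u\}_k$ and $\{M_{m+n'+kt}u\}_k$, so there is nothing to subtract. You need two independent relations on the same family.

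The paper gets them from the eigenvector. Since $p(L_t)$ commutes with $L_t-\omega$, the relation $p(L_t)M_{j+s_jt}u=0$ propagates to $\sum_{i=0}^{n_j}a_iB_i(s)M_{j+(s+i)t}u=0$ for every $s\geq s_j$, with coefficients polynomial in $s$. Applying $L_{st}$ to the relation at $s$ lands on the same family as the relation at $2s$. Eliminating the $i=0$ term then leaves a shorter relation with top coefficient $q(s)$.

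Showing $q\not\equiv0$ is the real work. For $\la\neq1$ its leading coefficient is $a_{n_j}(1-\la)t^{n_j+1}(1-2^{n_j})$. For $\la=1$ one needs $\Re(\mu)\in[-1,0)$ to rule out simultaneous vanishing of the constant and $s^{n_j}$ coefficients. Comparing $L_{n_2}L_{n_1}$ with $L_{n_1+n_2}$ applied to the same relation would also produce two relations on one family. Either way, a nonvanishing check on the new top coefficient is needed, and that is exactly what your sketch leaves out.

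\emph{A smaller point.} Drop $C$ from your $\Glm^{(x,y,z)}$. On a module of level $c\neq0$ one has $C^nv=c^nv\neq0$, so with $C$ included $(5)\Rightarrow(4)$ would be false.
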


	\begin{proof}
		One can check that $(5)\Rightarrow(3)\Rightarrow(1)$, $(5)\Rightarrow(4)\Rightarrow(2)$
		and $(2)\Rightarrow(1)$ are clear. Hence we only need to prove  $(1)\Rightarrow(5)$. Since $L_t$ acts locally finitely on $P$,    there exists a nonzero $v \in P$ and $\omega \in \C$ such that $L_t v = \omega v$. By Lemma \ref{le3.7}, one can see that there exists $N_1 \in \Z_+$ such that $L_n v = 0$ for all $n \geq N_1$.
		
		\begin{clai}\label{claim3}
			For any $j\in\{1, \dots, t\}$, there exists $\Sj\in\Z_+$ such that $j+(s-\la) t +2\mu \neq 0$ and $j+(s-\frac{\la+1}{2})t+\mu-\frac{1}{2} \neq 0$ for all $s\in \Z_+$ and $s\geq \Sj$.
		\end{clai}
		Take $j\in\{1, \dots, t\}$. If for all $s\in \Z_+$, $j+(s-\la) t +2\mu \neq 0$ and $j+(s-\frac{\la+1}{2})t+\mu-\frac{1}{2} \neq 0$, then the claim holds trivially. Otherwise, if there exists $s_1\in \Z_+$ such that $j+(s_1-\la) t +2\mu = 0$, then for $s\geq s_1+1$:
		$$j+(s-\la) t +2\mu = j+(s_1-\la) t +2\mu +(s-s_1) t= (s-s_1) t\neq 0.$$
		Similarly,  if there exists $s_2\in \Z_+$ such that $j+(s_2-\frac{\la+1}{2})t+\mu-\frac{1}{2} = 0$, then $j+(s-\frac{\la+1}{2})t+\mu-\frac{1}{2} \neq 0$ for all $s\geq s_2+1$. Let $\Sj=\mathrm{max}\{s_1,s_2\}+1$. The claim is obtained.

		Then, we show that $M_n v = 0$ for sufficiently large $n$. Fix $j \in \{1, \dots, t\}$. Since $L_t$ is locally finite on $P$, the subspace $\mathcal{U}(\C L_t) M_{j+\Sj t} v$ is finite-dimensional. By Claim \ref{claim3} and the equation
		\begin{equation*}
			(L_t - \omega) M_{j+st} v = [L_t, M_{j+st}] v = \big(j+(s-\la)t + 2\mu\big) M_{j+(s+1)t} v,\quad \forall\, s\in\Z,
		\end{equation*}
		we obtain $\sum_{s \in \N} \C M_{j+(s+\Sj) t}v=\mathcal{U}(\C L_t) M_{j+\Sj} v$ is finite-dimensional, which means there exists a smallest $n_j\in\N$ such that
		$$M_{j+s_j t}v,\,M_{j+(s_j+1)t}v,\,\ldots,\,M_{j+(s_j+n_j)t}v$$
		are linearly dependent for some integer $s_j\geq\Sj$. Hence, there exists a polynomial $p(x)$ of degree $n_j$ such that $p(L_t) M_{j+s_j t} v = 0$.
		Let $p(x) = \sum_{i=0}^{n_j} a_i (x-\omega)^i$ with $a_i\in\C\ (i=1,2,\ldots,n_j)$ and $a_{n_j} \neq 0$. By Claim \ref{claim3}, applying $L_t-\omega$ to the equation $p(L_t) M_{j+s_j t} v = 0$ repeatedly, we can conclude that
		\begin{equation}\label{pLtMs}
			p(L_t) M_{j+s t} v = 0,\quad \forall\, s\in\Z\ \text{and}\ s\geq s_j.
		\end{equation}
		For convenience, we denote $B_{n}(s) = \prod_{i=0}^{n-1}\big(j+(s+i- \lambda)t + 2\mu\big)$ for  $s\in\Z,n\geq 1$ and $B_{0}(s)=1$, then (\ref{pLtMs}) turns into
		\begin{equation}\label{pLtMs2}
			\sum_{i=0}^{n_j} a_i B_{i}(s) M_{j+(s+i) t} v = 0,\quad \forall\, s\in\Z\ \text{and}\ s\geq s_j.
		\end{equation}
		For any integer $s\geq \mathrm{max}\{N_1,s_j\}$, applying $L_{st}$ to (\ref{pLtMs2}), one can get
		\begin{equation}\label{pLtMs3}
			0 =\sum_{i=0}^{n_j} a_i B_{i}(s) L_{st} M_{j+(s+i) t} v= \sum_{i=0}^{n_j} a_i B_{i}(s) \big(j+(s+i-\la s)t + 2\mu\big) M_{j+(2s+i) t} v.
		\end{equation}
		Since $s\geq \mathrm{max}\{N_1,s_j\}>0$, we have $2s>s>s_j$. We replace $s$ with $2s$ in (\ref{pLtMs2}):
		\begin{equation}\label{pLtMs4}
			\sum_{i=0}^{n_j} a_i B_{i}(2s) M_{j+(2s+i) t} v = 0.
		\end{equation}
		
		Suppose $n_j\geq 1$. Evaluating $(\ref{pLtMs3})+\big(j+(1-\la )st + 2\mu\big) \times (\ref{pLtMs4})$, we can deduce that
		\begin{equation}\label{aBsM}
			\sum_{i=1}^{n_j} a_i \Big(B_{i}(s) \big(j+(s+i-\la s)t + 2\mu\big)-B_{i}(2s)\big(j+(1-\la )st + 2\mu\big)\Big) M_{j+(2s+i) t} v = 0.
		\end{equation}
		Denote the coefficient of $M_{j+(2s+n_j) t}$ in (\ref{aBsM}) by  $q(s)$, then we obtain
		\begin{equation*}
			q(s)= a_{n_j} \Big(B_{n_j}(s) \big(j+(s+n_j-\la s)t + 2\mu\big)-B_{n_j}(2s)\big(j+(1-\la )st + 2\mu\big)\Big).
		\end{equation*}
		Now we prove there exists integer $s'\geq \mathrm{max}\{N_1,s_j\}$ such that $q(s')\neq 0$. Otherwise, $q(s)=0$ for all integers $s\geq \mathrm{max}\{N_1,s_j\}$, then polynomial $q(x)\equiv 0$. Hence, we only need to prove $q(x)\not\equiv 0$. If $\la\neq 1$,  one can check the coefficient of $x^{n_j+1}$ in $q(x)$
		\begin{equation*}
			a_{n_j}(1-\la)t^{n_j+1} (1-2^{n_j})\neq 0,
		\end{equation*}
		since $n_j\geq 1$ and $a_{n_j} \neq 0$. If  $\la=1$, the constant term and the coefficient of $x^{n_j}$ in $q(x)$ are:
		\begin{eqnarray*}
			q_{0}&=&a_{n_j} n_j \prod_{i=0}^{n_j-1}\big(j+(i- 1)t + 2\mu\big),\\
			q_{n_j}&=&a_{n_j}  t^{n_j}\big(j+n_j t+2\mu-2^{n_j}(j+2\mu)\big),
		\end{eqnarray*}
		respectively. If $q_{0}=q_{n_j}=0$, we have
		\begin{eqnarray}
			\label{q1}
			\prod_{i=0}^{n_j-1}\big(j+(i- 1)t + 2\mu\big)&=&0,\\
			\label{q2}
			j+n_j t+2\mu-2^{n_j}(j+2\mu)&=&0.
		\end{eqnarray}
		By Remark \ref{rema1}, $2\mu\in[-2,0)$. Since $j \in \{1, \dots, t\}$ and $t\in\Z_+$, we can get $\Re(j-t + 2\mu)<0$ and $\Re\big(j+(i- 1)t + 2\mu\big)>0$ for all integers $i\geq 3$. From (\ref{q1}), it is clear that
		\begin{equation}\label{jmu}
			j+2\mu=0\ \text{or}\ j+t + 2\mu=0.
		\end{equation}
		Substituting (\ref{jmu}) into (\ref{q2}) yields
		\begin{equation*}
			n_j t=0\ \text{or}\ (n_j+2^{n_j}-1)t=0.
		\end{equation*}
		This is a contradiction to $n_j\geq 1$ and $t\in\Z_+$.
		Combining the arguments above, we can conclude that there exists integer $s'\geq \mathrm{max}\{N_1,s_j\}$ such that $q(s')\neq 0$. It implies that
		$$
		M_{j+(2s'+1) t}v,\,M_{j+(2s'+2) t}v,\,\ldots,\,M_{j+(2s'+n_j) t}v
		$$
		are linearly dependent, which contradicts the minimality of $n_j$ unless $n_j = 0$. Thus (\ref{pLtMs}) reduces to $M_{j+s t} v = 0$ for any $s\in\Z$ and $s\geq s_j$. Taking $N_2=(\mathrm{max}\{s_1,s_2,\ldots,s_t\}+1)t\in\Z_+$, then $M_n v = 0$ for all $n \geq N_2$. Similarly, there exists $N_3\in\Z_+$ such that $Y_{n-\frac{1}{2}} v = 0$ for all $n \geq N_3$.

		Let $N = \max\{N_1, N_2, N_3\}$. We have shown that %$\Glm^{(N,N,N)}$ annihilates $v$, i.e.,
		$\Glm^{(n)}v=0$ for all $n\geq N$. Since $P$ is simple, one can get $P = \mathcal{U}(\Glm) v$. By the PBW theorem, any element $u$ in $ P$ is a linear combination of the finitely nonzero vectors
		$$M_{i_1} M_{i_2}\ldots  M_{i_p} Y_{j_1} Y_{j_2}\ldots  Y_{j_q}  L_{k_1} L_{k_2}\ldots L_{k_r}v,$$
		where $p,q,r\in\N$, $i_1,\ldots,i_p,k_1,\ldots,k_r\in\Z$ and $j_1,\ldots,j_q\in\Z+\frac{1}{2}$. For any $u \in P$ with above form, noting that $[\Glm^{(m)},\Glm^{(n)}]\subseteq \Glm^{(m+n)}$ for all $m,n\in\frac{1}{2}\Z$, we can take $T=N+\sum_{a=1}^{p}|i_a|+\sum_{b=1}^{q}|j_b|+\sum_{d=1}^{r}|k_d|$. Then for any $m\geq T$, $\Glm^{(m)}u=0$. Thus $P$ is restricted.
	\end{proof}
	
	Denote
	\begin{equation*}
		\aligned
		\Gamma_1=\Big\{(\la,\mu)\,|\,&\big((\lambda+1)i+t+2\mu\big)\big(i+\lambda (i+t)-2 \mu\big)\neq 0,\ \forall \,t\in\N\textrm{ and }i\in\Z_+\Big\}
		\endaligned
	\end{equation*}
	and
	\begin{equation*}
		\aligned
		\Gamma_2=\Big\{(\la,\mu)\,|\,&n-\lambda m+2\mu \neq 0,\  \forall \,m, n\in\Z\Big\}.
		\endaligned
	\end{equation*}
	It is clear that $\Gamma_2 \subset \Gamma_1$. The following result determines some simple restricted modules over $\Glm$.
	
	\begin{theo}\label{th3}
		Let $P$ be a simple restricted $\Glm$-module. Assume that  $(\la,\mu)\in\Gamma_1$ and there exists $a\in\Z_+$ such that the action of $M_a$ on $P$ is injective. Then there exist $d_1,d_2\in\N$ and a simple $\Glm^{(d_1,d_2)}$-module $V$ satisfying the conditions in Theorem \ref{th1} such that
		$P\cong\mathrm{Ind}(V)$.
	\end{theo}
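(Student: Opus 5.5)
This follows the standard scheme of \cite{MZ,CG}: locate a suitable "vacuum-like" subspace of $P$, show it is a simple module over some $\Glm^{(d_1,d_2)}$ satisfying the hypotheses of Theorem \ref{th1}, and then use simplicity of $\mathrm{Ind}(V)$ together with the universal property of induction.

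\textbf{Step 1: choose $t$.} Since $P$ is restricted, every vector is killed by $M_i$ for large $i$. Because $M_a$ is injective, some $M_i$ acts nontrivially. Define
$$t=\max\{i\in\Z\mid M_iP\neq 0\}.$$
This is finite: take a nonzero $v$ with $\Glm^{(n)}v=0$ for $n\ge N$. Then $\mathrm{Ann}_P(M_i)$ for $i>$ (some bound) contains $v$, is a nonzero submodule-like object, and hence all of $P$ by simplicity. For the latter, use that $\{M_i\}_{i\ge n}$ is an ideal of the positive part together with the relation $[L_m,M_i]$, and argue as in the usual proofs that
$$\{w\in P\mid M_iw=0\ \forall\, i>t\}$$
is a nonzero submodule. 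This step uses $(\lambda,\mu)\in\Gamma_1$, so that brackets do not vanish accidentally.

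Next, for $d_2,d_1\ge 0$ put
$$V_{d_1,d_2}=\{w\in P\mid M_iw=0,\ Y_{j-\frac12}w=0,\ L_kw=0\ \ \forall\, i>t,\ j>t+d_2,\ k>t+d_1\}.$$
Restrictedness gives $V_{d_1,d_2}\neq 0$ once $d_1,d_2$ are large. Choose $d_2$ minimal with $V_{d_2}':=\{w\mid M_iw=0\ (i>t),\ Y_{j-\frac12}w=0\ (j>t+d_2)\}\neq0$, then $d_1$ large, ensuring $d_1\ge 2d_2-1$. A bracket computation shows $V=V_{d_1,d_2}$ is stable under $\Glm^{(d_1,d_2)}$: for instance $[L_k,M_{i-d_1}]$ lands in $M_{\ge}$ indices exceeding $t$ whenever needed. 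The index shifts $-d_1$ and $-d_2-\frac12$ in the definition of $\Glm^{(d_1,d_2)}$ are exactly what makes this work.

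\textbf{Step 2: injectivity of $M_t$ on $V$.} I expect this to be the main obstacle. Showing it requires proving that $\ker(M_t)\cap V$ would generate a proper submodule. The plan is to show that $\{w\in P\mid M_iw=0\ \forall\, i\ge t\}$ is a submodule, by bracketing with $L_m,Y$ and using condition (c) of Theorem \ref{th1} (guaranteed by $\Gamma_1$) to avoid zero coefficients. Its nonvanishing would contradict the maximality of $t$, so it is $0$, and hence $M_t$ is injective on $V$. Alternatively, if $t$ should instead be tied to the given $a$, one uses the injectivity of $M_a$ to control which $M_i$ vanish. Conditions (a), (b), (c) then hold, with (c) coming from $\Gamma_1$ upon replacing $t$ by $t+d_1$-type shifts.

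\textbf{Step 3: simplicity of $V$ and the isomorphism.} Take $V$ to be a simple $\Glm^{(d_1,d_2)}$-submodule. Rather than arguing existence in the abstract, apply the Corollary after Theorem \ref{th1} and Remark \ref{rema3.3}: the natural map $\mathrm{Ind}(V)\to P$ is nonzero and hence surjective by simplicity of $P$. By Claim \ref{claim3.1}, which does not need simplicity of $V$, any nonzero element of the kernel can be reduced by repeated degree-lowering to a nonzero element of $V$. But $V$ embeds in $P$, so the kernel is zero. Therefore $P\cong \mathrm{Ind}(V)$. Simplicity of $V$ then follows, since a proper submodule $W\subset V$ would induce a proper submodule of $\mathrm{Ind}(V)\cong P$.
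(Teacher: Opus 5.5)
There is a genuine gap in Steps 1 and 2, exactly where you choose $t$ and prove that $M_t$ is injective on $V$.

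\textbf{The maximum defining $t$ does not exist.} The sets $\{w\in P\mid M_iw=0\ \forall\, i>t\}$ and $\{w\in P\mid M_iw=0\ \forall\, i\ge t\}$ are not submodules, because negative modes do not preserve them. Concretely, let $P=\mathrm{Ind}(V)$ be a module as in Theorem \ref{th1}, which is what $P$ turns out to be. For $0\neq v\in V$ and $k\in\Z_+$, conditions (a), (b), (c) give
\[
M_{k+t}L_{-k}v=[M_{k+t},L_{-k}]v=-\big((\lambda+1)k+t+2\mu\big)M_tv\neq 0 .
\]
So $M_iP\neq 0$ for all large $i$, and $\max\{i\in\Z\mid M_iP\neq0\}$ is $+\infty$ in every situation the theorem covers. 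Consequently the ``contradiction with the maximality of $t$'' in Step 2 has no content, and injectivity of $M_t$ on $V$ is not established. Your argument also never genuinely uses the hypothesis that $M_a$ is injective, which is a symptom of the same problem.

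\textbf{The fix, as in the paper.} Make $t$ a \emph{minimum over nonzero joint annihilator subspaces} rather than a maximum over $P$. Put
\[
F_{i,j,k}=\{w\in P\mid M_{i+n}w=(1-\delta_{j+n,0})Y_{j+n-\frac12}w=L_{k+n}w=0,\ \forall\, n\in\Z_+\}.
\]
Restrictedness gives $F_{i,j,k}\neq0$ for large indices. Injectivity of $M_a$ gives $F_{i,j,k}=0$ whenever $i<a$; this is precisely where that hypothesis is needed. Hence there is a smallest integer $r_1$ with $F_{r_1,r_2,r_3}\neq 0$ for some $r_2,r_3$. Enlarging $r_2,r_3$, you may assume $r_2,r_3\ge r_1$ and $r_3-r_1\ge 2(r_2-r_1)-1$. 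Set $t=r_1$, $d_2=r_2-r_1$, $d_1=r_3-r_1$ and $V=F_{r_1,r_2,r_3}$.

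Injectivity of $M_t$ on $V$ is then immediate. A nonzero $w\in V$ with $M_{r_1}w=0$ would lie in $F_{r_1-1,r_2,r_3}$, contradicting the minimality of $r_1$.

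With this choice, the rest of your plan goes through and matches the paper. This includes the bracket check that $V$ is $\Glm^{(d_1,d_2)}$-stable, and obtaining (c) from $\Gamma_1$ by substituting $i\mapsto i+d_1$ in the second factor. It also includes the Step 3 kernel argument via Claim \ref{claim3.1} and Remark \ref{rema3.3}, and deducing simplicity of $V$ from $P\cong\mathrm{Ind}(V)$.
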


	\begin{proof}
		For any $i,j,k\in \Z$, we consider the vector space
		$$F_{i,j,k}=\{v\in P\,|\, M_{i+n}v=(1-\delta_{j+n,0})Y_{j+n-\frac{1}{2}}v=L_{k+n}v=0, \quad \mbox{for\ all}\ n\in \Z_+\}.$$
		Since $P$ is a restricted $\Glm$-module, we have  $F_{i,j,k}\neq0$ for sufficiently large ${i,j},k\in\Z$. Noting that $M_a$ on $P$ is  injective, we can deduce that $F_{i,j,k}=0$ for all $i< a $.
		%On the other hand, $F_{i,j,k}=0$ for all $i<0$ since we have $M_0v=\nu_0 v\neq0$ for any nonzero $v\in S$.
		Thus we can find a smallest nonnegative integer $r_1$, and take the  integers $r_2,r_3\geq r_1$ with $r_3-r_1\geq2(r_2-r_1)-1$ such that  $F_{r_1,r_2,r_3}\neq 0$. Denote  $d_1=r_3-r_1, d_2=r_2-r_1$ and  $V=F_{r_1,r_2,r_3}$.
		For any $v\in V,i>r_1,j>r_2,k>r_3$ and $l\geq 1$, it follows from $k+l-d_2-\frac{1}{2}>r_3+\frac{1}{2}-d_2\geq r_2-\frac{1}{2}$ and $j+l-d_2-1>r_2-d_2=r_1$  that
		$$L_{k}(Y_{l-d_2-\frac{1}{2}}v)=(l-d_2-\frac{\lambda+1}{2}k+\mu-\frac{1}{2})Y_{k+l-d_2-\frac{1}{2}}v=0$$
		and
		$$
		Y_{j-\frac{1}{2}}(Y_{l-d_2-\frac{1}{2}}v)=(j-l-d_2)M_{j+l-d_2-1}v=0,
		$$
		respectively. Besides, for $i>r_1$, one has $
		M_{i}(Y_{l-d_2-\frac{1}{2}}v)=Y_{l-d_2-\frac{1}{2}}M_{i}v=0
		$. Thus we have $Y_{l-d_2-\frac{1}{2}}v\in V$ for
		all $l\geq 1$. Similarly, we can also obtain $M_{e-d_1}v\in V$ and $L_{e}v\in V$
		for all $e\in \N$. Therefore, $V$ is a $\Glm^{(d_1,d_2)}$-module.
		
		By the definition of $V$, we can obtain that the action of  $M_{r_1}$ on $V$ is injective. Since $P$ is simple
		and generated by $V$, then there exists a canonical surjective map
		$$\pi:\mathrm{ Ind}(V) \rightarrow P, \quad \pi(1\otimes v)=v,\quad \forall\,  v\in V.$$
		Next we only need to show that $\pi$ is also injective, that is to say, $\pi$ as the canonical map is bijective.  Let $K=\mathrm{ker}(\pi)$. Obviously, $K\cap V=0$. If	$K\neq0$, we can choose a nonzero vector $v\in K\setminus V$ such that $\mathrm{deg}(v)=(\mi,\mj,\mk)$ is minimal possible.
		Note that $K$ is a $\Glm$-submodule of $\mathrm{Ind}(V)$.
		Since $(\la,\mu)\in\Gamma_1$, by Claim \ref{claim3.1} and Remark \ref{rema3.3}, we can create a new vector $u\in K$  with $\mathrm{deg}(u)\prec(\mi,\mj,\mk)$, which is a contradiction. This forces $K=0$,
		that is, $P\cong \mathrm{Ind}(V)$. According to  the property of induced modules, we see that $V$ is simple as a $\Glm^{(d_1,d_2)}$-module.
	\end{proof}

	\begin{remark}
		In fact, we only need the condition $\Re(\mu)\in[-1,0)$ if $\la=1$. In particular, if $(\la,\mu)=(0,0)$, one can still get the above results.
	\end{remark}

	%\begin{remark} \rm
	%From the above proof, we know that any simple module satisfying conditions in
	%Theorem \ref{th2} is determined by some simple module $V$ over a certain subalgebra $\Glm^{(d_1,d_2)}$.
	%The conditions of Theorem  \ref{th1}  imply that  $V$  can be viewed as
	%a simple module over some finite-dimensional solvable quotient algebra of $\Glm^{(d_1,d_2)}$. This reduces the study of such modules over $\Glm$ to the study of simple modules over the corresponding finite-dimensional algebras.
	%\end{remark}
	%
	\section{Vertex algebras associated to $\Glm$}\label{Sec4}
	
	In this section, we  mainly  recall the definition of  vertex algebras associated to the Lie algebra $\Glm$ and some known  results (see \cite{LS}). Then  we give some applications of the weak modules of vertex   algebras.
	
	Let $W$ be a general vector space. Set
	
	$$
	\mathcal{E}(W)=\mathrm{Hom}\Big(W, W\big((x)\big)\Big) \subset \big(\mathrm{End} (W)\big)[[x, x^{-1}]],
	$$
	where
	$$
	W\big((x)\big)=\left\{ \sum_{n \in \Z} w_{n} x^{n} \mid w_{n} \in W, \, w_{n}=0 \text{ for } n \text{ sufficiently negative} \right\}.
	$$
	The identity element on $W$, denoted by $\mathbf{1}$, is a special element of $\mathcal{E}(W)$. Now, we give the definition of the vertex operator algebra and its (weak) module.
	
	\begin{defi}\rm
		A \emph{vertex algebra} denoted by a quadruple $(V,Y,\mathbf{1},D)$ is a vector space $V$ equipped with a linear map
		\begin{eqnarray*}
			Y(\cdot,z)\ :\ V&\rightarrow&\big(\mathrm{End}(V)\big)[[z,z^{-1}]],\\
			v&\mapsto&Y(v,z)=\sum_{n\in\Z}v_n z^{-n-1}\quad (\mathrm{where}\  v_n \in \mathrm{End}(V)).
		\end{eqnarray*}
		The vector $\mathbf{1}\in V$ (called the \emph{vacuum vector}) and the endomorphism $D$ of $V$ satisfy the following relations:
		
		\noindent
		{\rm (1)} For any $u,v \in V$, $u_nv =0$ for $n$ sufficiently large;
		
		\noindent
		{\rm (2)} $[D, Y(v, z)]= Y\big(D(v), z\big)=\frac{\mathrm{d}}{\mathrm{d}z}Y (v, z)$ for any $v \in V$;
		
		\noindent
		{\rm (3)} $Y(\mathbf{1},z) =\mathrm{Id}_V$ (called the \emph{identity operator} of $V$);
		
		\noindent
		{\rm (4)} $Y(v, z)\mathbf{1} \in \mathrm{End}(V)[[z]]$ and $\mathrm{lim}_{z\rightarrow 0}Y (v, z)\mathbf{1} = v$ for any $v \in V$;
		
		\noindent
		{\rm (5)} $z_{0}^{-1} \delta\left(\frac{z_{1}-z_{2}}{z_{0}}\right) Y(u, z_{1}) Y(v, z_{2}) -z_{0}^{-1} \delta\left(\frac{-z_{2}+z_{1}}{z_{0}}\right) Y(v, z_{2}) Y(u, z_{1}) = z_{2}^{-1} \delta\left(\frac{z_{1}-z_{0}}{z_{2}}\right) Y\big(Y(u, z_{0}) $ $v, z_{2}\big)$ for any $u,v \in V$ (the Jacobi identity).
	\end{defi}
	Besides, a vertex algebra $V$ is called a \emph{vertex operator algebra} if there exists another vector $\omega$ (called \emph{conformal vector}) of $V$ satisfying the following conditions:
	
	\noindent
	{\rm (6)} $[L(m), L(n)] = (n - m) L(m+n) + \frac{n^{3}-n}{12} \delta_{m+n, 0} C$ for $m, n \in \Z$, where $Y(\omega, z) = \sum_{n \in \Z} L(n)$ $ z^{-n-2}$;
	
	\noindent
	{\rm (7)} $L_{-1} = D$, i.e., $\frac{d}{dz} Y(v, z) = Y(L_{-1} v, z)$ for any $v \in V$;
	
	\noindent
	{\rm (8)} $V$ is $\Z$-graded such that $V = \oplus_{n \in \Z} V_{(n)}$, $L(0)|_{V_{(n)}} = n \mathrm{Id}_{V_{(n)}}$, $\dim(V_{(n)}) < \infty$ and $V_{(n)} = 0$ for $n$ sufficiently negative.
	
	Let $V$ be a vector space. A \emph{vertex operator} on $V$ is a formal series $a(z) = \sum_{n\in \Z} a_nz^{-n-1} \in \mathrm{End}(V)[[z,z^{-1}]]$ such that for any $u \in V, a_n u = 0$ for sufficiently large $n$, where $a\in V$. All vertex operators on $V$ form a vector space (over $\C$), denoted by $\mathrm{VO}(V)$. On $\mathrm{VO}(V)$, we have a linear endomorphism $D =\frac{\mathrm{d}}{\mathrm{d}z}$, the formal differentiation.
	
	Two vertex operators $a(z)$ and $b(z)$ on $V$ are said to be \emph{mutually local} if there is a non-negative integer $N$ such that
	$$(z_1-z_2)^{N}a(z_1) b(z_2) = (z_1-z_2)^{N} b(z_2) a(z_1).$$
	A space $S$, which consists of vertex operators, is said to be \emph{local} if any two vertex operators of $S$ are mutually local, and a maximal local space of vertex operators is called a \emph{local system}.
	
	\begin{defi}\label{VAmod}\rm
		Let $V$ be a vertex algebra. A $V$-\emph{module} is a triple $(W,d,Y_W)$ where $W$ is a vector space, $d$ is an endomorphism of $W$ and $Y_W$ is a linear map satisfying
		\begin{eqnarray*}
			Y_W(\cdot,z)\ :\ V&\rightarrow&\big(\mathrm{End}(W)\big)[[z,z^{-1}]],\\
			v&\mapsto&Y_W(v,z)=\sum_{n\in\Z}v_n z^{-n-1}\quad (\mathrm{where}\  v_n \in \mathrm{End}(W))
		\end{eqnarray*}
		and
		
		\noindent
		{\rm (1)} For any $v \in V$ and $u \in W$, $v_n u =0$ for $n$ sufficiently large;
		
		\noindent
		{\rm (2)} $[d, Y_W(v, z)]= Y_W\big(D(v), z\big)=\frac{\mathrm{d}}{\mathrm{d}z}Y_W (v, z)$ for any $v \in V$;
		
		\noindent
		{\rm (3)} $Y_W(\mathbf{1},z) =\mathrm{Id}_W$;
		
		\noindent
		{\rm (4)} $z_{0}^{-1} \delta\left(\frac{z_{1}-z_{2}}{z_{0}}\right) Y_W(u, z_{1}) Y_W(v, z_{2}) -z_{0}^{-1} \delta\left(\frac{-z_{2}+z_{1}}{z_{0}}\right) Y_W(v, z_{2}) Y_W(u, z_{1}) = z_{2}^{-1} \delta\left(\frac{z_{1}-z_{0}}{z_{2}}\right)Y_W\big( $ $Y_W(u, z_{0}) v$ $, z_{2}\big)$ for any $u,v \in V$.
		
		If $V$ is a vertex operator algebra, a module $W$ for $V$ as a vertex algebra is called a \emph{weak module} for $V$ as a vertex operator algebra.
	\end{defi}
	
	For $\Glm$, form three generating functions:
	$$L(x)=\sum _{n\in \mathbb{Z}}L_{n}x^{-n-2}, \quad \tilde{Y}(x)=\sum _{n\in \mathbb{Z}}Y_{n-\frac{1}{2}}x^{-n-1}, \quad M(x)=\sum _{n\in \mathbb{Z}}M_{n}x^{-n-1}.$$
	Then, Lie bracket relations in (\ref{def1.1}) can be written as (cf.  \cite{LS})
	%\begin{equation}\label{eqt4.1}
	%    \aligned
	\begin{align*}
		{}[L(x_{1}), L(x_{2})]=&-L'(x_{2}) x_{1}^{-1} \delta\left(\frac{x_{2}}{x_{1}}\right)-2 L(x_{2}) \frac{\partial}{\partial x_{2}} x_{1}^{-1} \delta\left(\frac{x_{2}}{x_{1}}\right)+\frac{1}{12}(\frac{\partial}{\partial x_{2}})^{3} x_{1}^{-1} \delta\left(\frac{x_{2}}{x_{1}}\right) C,\\
		{}[L(x_{1}), \tilde{Y}(x_{2})]= &-\tilde{Y}'(x_{2}) x_{1}^{-1} \delta\left(\frac{x_{2}}{x_{1}}\right)-\frac{1}{2}(\la+3) \tilde{Y}(x_{2}) \frac{\partial}{\partial x_{2}} x_{1}^{-1} \delta\left(\frac{x_{2}}{x_{1}}\right)\\
		&+\frac{1}{2}(2\mu+\la) x_{2}^{-1} \tilde{Y}(x_{2}) x_{1}^{-1} \delta\left(\frac{x_{2}}{x_{1}}\right),\\
		{}[\tilde{Y}(x_{1}), \tilde{Y}(x_{2})]=&M'(x_{2}) x_{1}^{-1} \delta\left(\frac{x_{2}}{x_{1}}\right)+2 M(x_{2}) \frac{\partial}{\partial x_{2}} x_{1}^{-1} \delta\left(\frac{x_{2}}{x_{1}}\right),\\
		{}[L(x_{1}), M(x_{2})]=&-M'(x_{2}) x_{1}^{-1} \delta\left(\frac{x_{2}}{x_{1}}\right)-(\la+1) M(x_{2}) \frac{\partial}{\partial x_{2}} x_{1}^{-1} \delta\left(\frac{x_{2}}{x_{1}}\right) \\
		&+(2\mu+\la) x_{2}^{-1} M(x_{2}) x_{1}^{-1} \delta\left(\frac{x_{2}}{x_{1}}\right).
	\end{align*}
	%\endaligned
	%\end{equation}
	
	Now, we consider the case $\mu+\frac{\la}{2}= 0$. For convenience, we denote $\G:=\G_{\la,-\frac{\la}{2}}$. It is clear that $\G$ is a $\frac{1}{2}\Z$-graded Lie algebra $\G=\oplus_{r \in \frac{1}{2}\Z} \G_{(r)}$ with
	\begin{eqnarray*}
		&&\G_{(0)}=\C L_{0} \oplus \C M_{0} \oplus \C C, \\
		&&\G_{(m-\frac{1}{2})}=\C Y_{m-\frac{1}{2}}, \\
		&&\G_{(n)}=\C L_{n} \oplus \C M_{n},
	\end{eqnarray*}
	where $m,n\in\Z$ and $n\neq 0$.
	Since $[L_{0}, M_{n}]=(n+2\mu) M_{n}$ and $[L_{0}, Y_{m-\frac{1}{2}}]=(m+\mu-\frac{1}{2}) Y_{m-\frac{1}{2}}$, this grading is not given by $\text{ad} L_{0}$-eigenvalues unless $\mu=0$. Let
	\begin{eqnarray*}
		&&\G_{+}=\mathrm{Span}_{\C}\{L_{n-1},Y_{n-\frac{1}{2}},M_{n},C\, |\, n\in\N \}, \\
		&&\G_{-}=\mathrm{Span}_{\C}\{L_{-n-1},Y_{-n-\frac{1}{2}},M_{-n}\, |\, n\in\Z_+ \}.
	\end{eqnarray*}
	
	Since we always assume that the action of $C$ is scalar $c\in\C$, form the induced module
	$$V(c)=\mathcal{U}(\G) \otimes_{\mathcal{U}(\G_{+})} \C. $$
	Identify $\C$ as a subspace of $V(c)$ and set $\mathbf{1}=1 \in \C \subset V(c)$. Then by the $\mathrm{PBW}$ Theorem, we have $V(c)=\oplus_{n \geq 0} V(c)_{(n)}$ with $V(c)_{(0)}=\C$, and $V(c)_{(n)}$ ($n \geq 1$) has a basis consisting of vectors
	$$
	M_{-n_{1}} \cdots M_{-n_{p}} Y_{-k_{1}-\frac{1}{2}} \cdots Y_{-k_{s}-\frac{1}{2}} L_{-m_{1}} \cdots L_{-m_{r}} \mathbf{1},
	$$
	where $r, s, p \geq 0$, $m_{1} \geq \cdots \geq m_{r} \geq 2$, $k_{1} \geq \cdots \geq k_{s} \geq 1$, $n_{1} \geq \cdots \geq n_{p} \geq 1$ with $\sum_{i=1}^{r} m_{i}+\sum_{j=1}^{s} k_{j}+\sum_{t=1}^{p} n_{t}=n$. One can check that $V(c)$ is a vertex algebra, which is uniquely determined by the condition that $\mathbf{1}$ is the vacuum vector and $Y(L_{-2} \mathbf{1}, x)=L(x)$, $Y(Y_{-\frac{3}{2}} \mathbf{1}, x)=\tilde{Y}(x)$ and  $Y(M_{-1} \mathbf{1}, x)=M(x)$ by the Theorem 5.7.1 in \cite{LL}. Then, we have the relation between restricted modules and vertex algebra modules by \cite{LS}.
	
	\begin{theo}[\cite{LS}, Proposition 2.14]\label{thm4.3}
		Fix $c \in \C$ and $\lambda \in \C$. There is a one-to-one correspondence between restricted $\G$-module $W$ of level $c$ and module $(W,L_{-1},Y_W)$ of the vertex algebra $V(c)$ with
		\begin{equation}\label{YW}
			Y_W(L_{-2} \mathbf{1}, x)=L(x),\quad Y_W(Y_{-\frac{3}{2}} \mathbf{1}, x)=\tilde{Y}(x),\quad Y_W(M_{-1} \mathbf{1}, x)=M(x).
		\end{equation}
	\end{theo}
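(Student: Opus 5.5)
The plan is to follow Li's theory of local systems, as in \cite{LS} and \cite[Theorem 5.7.1 and \S 6.2]{LL}. I would prove the two directions separately and then check that the two constructions are mutually inverse.

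\emph{From $V(c)$-modules to $\G$-modules.} Let $(W,d,Y_W)$ be a $V(c)$-module. Define the operators $L_n,Y_{n-\frac12},M_n$ on $W$ as the coefficients of $Y_W(L_{-2}\mathbf 1,x)$, $Y_W(Y_{-\frac32}\mathbf 1,x)$ and $Y_W(M_{-1}\mathbf 1,x)$, so that \eqref{YW} holds. The Jacobi identity gives the commutator formula
$$[Y_W(u,x_1),Y_W(v,x_2)]=\sum_{j\ge0}Y_W(u_jv,x_2)\tfrac{1}{j!}\big(\tfrac{\partial}{\partial x_2}\big)^j x_1^{-1}\delta\big(\tfrac{x_2}{x_1}\big).$$
So I only need to compute the products $u_jv$ for $j\ge0$ inside $V(c)$, with $u,v$ among the three generators. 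For example,
$$(L_{-2}\mathbf 1)_0 Y_{-\frac32}\mathbf 1=L_{-1}Y_{-\frac32}\mathbf 1,\qquad (L_{-2}\mathbf 1)_1Y_{-\frac32}\mathbf 1=[L_0,Y_{-\frac32}]\mathbf 1 .$$
Comparing these with the generating-function relations displayed before the theorem shows that the brackets \eqref{def1.1} hold on $W$. Here the $x_2^{-1}$ terms vanish because $2\mu+\la=0$. Property (1) of a module then says exactly that $W$ is restricted, and $C$ acts as $c$ by construction.

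\emph{From restricted $\G$-modules to $V(c)$-modules.} Let $W$ be a restricted $\G$-module of level $c$.
\begin{enumerate}[(i)]
\item Restrictedness means $L(x),\tilde Y(x),M(x)\in\mathcal E(W)$.
\item The bracket relations show that $[a(x_1),b(x_2)]$ is a combination of derivatives of $x_1^{-1}\delta(x_2/x_1)$ of order at most $3$. Hence $(x_1-x_2)^4$ kills the commutator, and $S=\{L(x),\tilde Y(x),M(x),\mathbf 1_W\}$ is a local set.
\item By Li's theorem, $S$ generates a vertex algebra $\langle S\rangle\subset\mathcal E(W)$, with vacuum $\mathbf 1_W$ and $D=\frac{d}{dx}$, and $W$ is a faithful $\langle S\rangle$-module with $Y_W(a(x),z)=a(z)$.
\item Using the operations $a(x)_n$, the space $\langle S\rangle$ becomes a $\G$-module of level $c$. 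The same commutator computation as above, now carried out in $\langle S\rangle$, shows the brackets are respected.
\item Since $a(x)_n\mathbf 1_W=0$ for $n\ge0$, matching indices shows that $\G_+$ annihilates $\mathbf 1_W$. For example, $L_{n-1}$ corresponds to $L(x)_n$ and $M_n$ to $M(x)_n$, for $n\ge0$.
\item The universal property of $V(c)=\mathcal U(\G)\otimes_{\mathcal U(\G_+)}\C$ then gives a $\G$-module map $\psi:V(c)\to\langle S\rangle$ sending $\mathbf 1\mapsto\mathbf 1_W$, $L_{-2}\mathbf 1\mapsto L(x)$, $Y_{-\frac32}\mathbf 1\mapsto\tilde Y(x)$ and $M_{-1}\mathbf 1\mapsto M(x)$.
\item Because $V(c)$ is generated by these three vectors and $\psi$ intertwines the generating fields, $\psi$ is a vertex algebra homomorphism.
\item Pulling back along $\psi$ makes $W$ a $V(c)$-module satisfying \eqref{YW}, with $d=L_{-1}$.
\end{enumerate}

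\emph{Bijectivity and main obstacle.} The two constructions are inverse to each other. A $V(c)$-module structure is determined by the generating fields, since $V(c)$ is generated by $L_{-2}\mathbf 1$, $Y_{-\frac32}\mathbf 1$ and $M_{-1}\mathbf 1$, and those fields are prescribed by \eqref{YW}. Conversely, the $\G$-action is recovered as the coefficients of these fields.

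The delicate point, where I expect most of the work, is the translation/derivation condition $[L_{-1},a(x)]=a'(x)$ together with the exact matching of modes, in the presence of the deformation parameters. The brackets give
$$[L_{-1},Y_{n-\frac12}]=\big(n+\tfrac{\la}{2}+\mu\big)Y_{n-\frac32},\qquad [L_{-1},M_n]=(n+\la+2\mu)M_{n-1}.$$
These equal $nY_{n-\frac32}$ and $nM_{n-1}$, which is what $\frac{d}{dx}$ requires for the chosen normalizations $x^{-n-1}$, precisely because $\mu=-\frac{\la}{2}$. The same condition is what removes the $x_2^{-1}$ terms, so that $\langle S\rangle$ carries a $\G$-action compatible with $\G_+\mathbf 1_W=0$. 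I would therefore verify these weight and index computations carefully first. Everything else is the standard argument of \cite[Theorem 5.7.1]{LL}.
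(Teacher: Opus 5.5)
You follow the right route. The paper gives no proof of its own and only cites \cite[Proposition 2.14]{LS}, whose argument is this local-system one: the commutator formula in one direction; Li's $\langle S\rangle$, the universal property of $V(c)$, and generation by $L_{-2}\mathbf 1$, $Y_{-\frac32}\mathbf 1$, $M_{-1}\mathbf 1$ in the other. Steps (i)--(vii) are fine.

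\textbf{The sign error.} The one computation you single out as the crux has the wrong sign, and this makes step (viii) false as written. Since $\frac{d}{dx}\sum_n a_nx^{-n-1}=\sum_n(-n)a_{n-1}x^{-n-1}$, the axiom $[d,a(x)]=a'(x)$ requires $[d,a_n]=-n\,a_{n-1}$. You correctly found $[L_{-1},Y_{n-\frac12}]=nY_{n-\frac32}$ and $[L_{-1},M_n]=nM_{n-1}$, and likewise $[L_{-1},L_n]=(n+1)L_{n-1}$. Hence $[L_{-1},a(x)]=-a'(x)$ for all three generating fields. The cause is that the convention $[L_m,L_n]=(n-m)L_{m+n}+\cdots$ in \eqref{def1.1} is the negative of the usual Virasoro one. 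The same sign appears in two other places:
\begin{enumerate}[(i)]
\item In the displayed relation $[L(x_1),M(x_2)]=-M'(x_2)x_1^{-1}\delta(x_2/x_1)+\cdots$.
\item Inside $V(c)$ itself, $L_{-1}M_{-1}\mathbf 1=-M_{-2}\mathbf 1=-D(M_{-1}\mathbf 1)$, so $D=-L_{-1}$ on $V(c)$.
\end{enumerate}
With $d=L_{-1}$, axiom (2) of Definition \ref{VAmod} therefore fails; it would force $a'(x)=0$ on $W$. The printed statement carries the same sign slip.

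This also affects your first direction. There you must compute $L_{-1}Y_{-\frac32}\mathbf 1=-Y_{-\frac52}\mathbf 1$ directly, not via $L_{-1}=D$. Otherwise you obtain $+\tilde Y'(x_2)$, contradicting the displayed relation you are matching.

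\textbf{The repair.} Take $d=-L_{-1}$; then $[d,a(x)]=a'(x)$ holds for $a\in\{L,\tilde Y,M\}$. You still need a short argument that this propagates from the generators to every $Y_W(v,x)$, $v\in V(c)$. The set of $a(x)\in\langle S\rangle$ with $[d,a(x)]=a'(x)$ is closed under all $n$-th products, because both $[d,\cdot\,]$ and $D=\frac{d}{dx}$ are derivations of those products. Your observation that $\mu=-\frac{\la}{2}$ is exactly what removes the constant shift in $[L_{-1},a_n]$ and the $x_2^{-1}$ terms remains the real content. With the sign corrected, the rest of your argument goes through unchanged.
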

	\begin{remark}
		Similarly, one can get the restricted $\G$-module $W$ structure from $V(c)$-module $(W,L_{-1},Y_W)$ still by (\ref{YW}).
	\end{remark}
	By Theorems \ref{th3} and   \ref{thm4.3}, we immediately get the following result.
	\begin{prop}
		Let $c\in\mathbb{C}$.  Then  simple restricted  $\mathcal{G}_{\lambda,\mu}$-modules given in Theorem  \ref{th3}  are precisely  simple  $V(c)$-modules
		for  $\mu+\frac{\la}{2}= 0$.
	\end{prop}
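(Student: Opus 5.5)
The proposition should follow by combining Theorem \ref{th3} with Theorem \ref{thm4.3}. The one extra ingredient is to check that simplicity is preserved under the correspondence. We fix $c\in\C$ and specialise to $\mu=-\frac{\la}{2}$, so that $\Glm=\G$. The claim is that the simple restricted $\G$-modules of level $c$ singled out in Theorem \ref{th3} coincide with the simple $V(c)$-modules of the corresponding type. By this we mean those simple modules on which some $M_a$ with $a\in\Z_+$ acts injectively, when $(\la,-\frac{\la}{2})\in\Gamma_1$.

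The first step is to restate Theorem \ref{thm4.3} as an isomorphism of categories. Given a restricted $\G$-module $W$ of level $c$, Theorem \ref{thm4.3} produces a $V(c)$-module $(W,L_{-1},Y_W)$ determined by \eqref{YW}. Conversely, the Remark after it recovers the $\G$-action from the modes of $Y_W(L_{-2}\mathbf{1},x)$, $Y_W(Y_{-\frac32}\mathbf{1},x)$ and $Y_W(M_{-1}\mathbf{1},x)$. These two constructions are mutually inverse on the same underlying space, and linear maps commuting with one structure commute with the other. Restricted-ness of the $\G$-action corresponds exactly to the truncation axiom (1) in Definition \ref{VAmod}.

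The second step is to match submodules. Take a subspace $U\subseteq W$. If $U$ is a $V(c)$-submodule, it is stable under all modes of the three generating fields. Those modes are exactly $L_n$, $Y_{n-\frac12}$ and $M_n$, and $C$ acts by the scalar $c$, so $U$ is a $\G$-submodule. Conversely, suppose $U$ is a $\G$-submodule. Because $V(c)$ is generated by $L_{-2}\mathbf{1}$, $Y_{-\frac32}\mathbf{1}$ and $M_{-1}\mathbf{1}$, the Jacobi identity (iterate formula) writes every $Y_W(v,x)$ as a combination of normally ordered products of derivatives of the three generating fields. Each coefficient of such a product is a finite sum on any fixed vector, because $W$ is restricted. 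Hence every mode $v_n$ preserves $U$. It follows that $W$ is simple as a $\G$-module if and only if it is simple as a $V(c)$-module.

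Finally, we apply Theorem \ref{th3}, which says the simple restricted $\G$-modules with $M_a$ injective are exactly the modules $\mathrm{Ind}(V)$ of Theorem \ref{th1}. These therefore correspond bijectively to simple $V(c)$-modules on which the zero-mode component $M_a$ of $Y_W(M_{-1}\mathbf{1},x)$ acts injectively. The only step that needs care is the converse direction of submodule matching, namely that all of $V(c)$ acts through the generating fields. Here we would rely on the generation statement from \cite[Theorem 5.7.1]{LL}, which the paper already uses to define $V(c)$.
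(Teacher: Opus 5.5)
Your proposal is correct and takes essentially the paper's route: the paper's whole argument is to combine Theorem \ref{th3} with the Li--Sun correspondence (Theorem \ref{thm4.3}). You additionally supply the step the paper leaves implicit, namely that $\G$-submodules and $V(c)$-submodules coincide, via generation of $V(c)$ by $L_{-2}\mathbf{1}$, $Y_{-\frac{3}{2}}\mathbf{1}$, $M_{-1}\mathbf{1}$ and the iterate formula truncating on restricted modules, so that simplicity and isomorphism classes transfer; your reading of ``precisely'' as a bijection between the $M_a$-injective subclasses is the sensible one (minor slip: $M_a=(M_{-1}\mathbf{1})_a$ is the $a$-th mode, not a zero mode).
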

	Moreover, taking the conformal vector $\omega=L_{-2} \mathbf{1}$, we can deduce that  $V(c)$ is a vertex operator algebra. By Definition \ref{VAmod} and Theorem \ref{thm4.3}, we immediately obtain the following result.
	\begin{coro}\label{pro:weak-module}
		Let $W$ be a simple restricted module of $\Glm$ with central charge $c$. Then $W$ naturally carries the structure of a simple weak $V(c)$-module.
	\end{coro}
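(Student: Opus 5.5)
The statement combines Theorem \ref{thm4.3} with the definition of a weak module. The plan is to produce the vertex algebra module structure first, then read off weakness and simplicity. Throughout, we are in the setting of Section \ref{Sec4}, where $\mu+\frac{\lambda}{2}=0$, so $\Glm=\G$ and $V(c)$ is defined.

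First, let $W$ be a simple restricted $\G$-module of level $c$. Theorem \ref{thm4.3} gives a $V(c)$-module structure $(W,L_{-1},Y_W)$. It is determined by \eqref{YW}: we set $Y_W(L_{-2}\mathbf{1},x)=L(x)$, $Y_W(Y_{-\frac{3}{2}}\mathbf{1},x)=\tilde{Y}(x)$ and $Y_W(M_{-1}\mathbf{1},x)=M(x)$, and extend to all of $V(c)$ by normal-ordered products and derivatives, i.e.\ iterated Jacobi identity. This is Li's local system construction. The three series are mutually local fields on $W$: they are vertex operators because $W$ is restricted, and they are local because the brackets above are finite sums of derivatives of $x_1^{-1}\delta(x_2/x_1)$. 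Hence they generate a vertex algebra inside $\mathcal{E}(W)$, which is a quotient of $V(c)$ by the universal property of the induced module. Since $\omega=L_{-2}\mathbf{1}$ makes $V(c)$ a vertex operator algebra, Definition \ref{VAmod} says that a module for $V(c)$ as a vertex algebra is by definition a weak module. So $W$ is a weak $V(c)$-module.

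Second, for simplicity we compare submodules. By \eqref{YW}, the modes of $Y_W(L_{-2}\mathbf{1},x)$, $Y_W(Y_{-\frac{3}{2}}\mathbf{1},x)$ and $Y_W(M_{-1}\mathbf{1},x)$ are exactly the operators $L_n$, $Y_{n-\frac{1}{2}}$ and $M_n$, with $C$ acting as $c$. So every $V(c)$-submodule of $W$ is $\G$-stable.

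Conversely, a $\G$-submodule $W'$ is stable under all modes of the three generating fields. Every $Y_W(v,x)$ with $v\in V(c)$ has modes that are (possibly infinite but locally finite) combinations of products of these generator modes. This follows from the associativity/iterate formula, or from the fact that $v$ is a PBW monomial applied to $\mathbf{1}$. Hence $W'$ is also a $V(c)$-submodule. Thus the two submodule lattices coincide, and simplicity of $W$ over $\G$ gives simplicity as a weak $V(c)$-module.

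The only delicate point is the converse inclusion of submodules. I would handle it by induction on the PBW length of $v$, using the standard formula for $Y_W(a_{n}b,x)$ in terms of $Y_W(a,x)$ and $Y_W(b,x)$. Each mode of the result acts on a fixed vector as a finite sum, by restrictedness, so $W'$ is preserved.
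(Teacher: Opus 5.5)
Your proposal is correct and follows the paper's route. Theorem \ref{thm4.3} supplies the $V(c)$-module structure $(W,L_{-1},Y_W)$, and since $V(c)$ is regarded as a vertex operator algebra with $\omega=L_{-2}\mathbf{1}$, Definition \ref{VAmod} makes this a weak module; you are right to work in the case $\mu+\frac{\lambda}{2}=0$, where Theorem \ref{thm4.3} applies, and your comparison of submodule lattices via the iterate formula and restrictedness just spells out the simplicity the paper leaves implicit in the one-to-one correspondence.
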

	
	For a vector space $W$, set
	$$
	W\{x\} =\Big\{\sum_{\alpha\in\C}w_{\alpha}x^{\alpha}\,|\,w_{\alpha}\in W\Big\}.
	$$
	\begin{defi}
		Let $V$ be a vertex algebra and let $\sigma$ be an automorphism of $V$ on which $\sigma$ acts semisimply. A \emph{$\sigma$-twisted $V$-module} is a vector space $W$ equipped with a linear map
		$$
		Y_W(\cdot,x)\ :\ V\rightarrow\big(\mathrm{End}(W)\big)\{x\}
		$$
		satisfying the conditions that $Y_W(\mathbf{1},x) =\mathrm{Id}_W$, $x^{\alpha} Y_W(u,x) \in \mathcal{E}(W)$ and
		\begin{align*}
			x_{0}^{-1} \delta&\left(  \frac{x_{1}-x_{2}}{x_{0}}\right) Y_W(u, x_{1}) Y_W(v, x_{2}) -x_{0}^{-1} \delta\left(\frac{-x_{2}+x_{1}}{x_{0}}\right) Y_W(v, x_{2}) Y_W(u, x_{1}) \\
			&\quad = x_{2}^{-1} \delta\left(\frac{x_{1}-x_{0}}{x_{2}}\right)\left(\frac{x_{1}-x_{0}}{x_{2}}\right)^{-\alpha}Y_W\big( Y_W(u, x_{0}) v, x_{2}\big),
		\end{align*}
		where $\alpha\in\C$, $v\in V$ and $u\in V^{\alpha}:=\{u \in V\, |\, \sigma(u) = e^{2\pi\sqrt{-1}\alpha}u\}.$
	\end{defi}
	
	If $\mu+\frac{\la}{2}\neq 0$, Li and Sun give a more general conclusion in \cite{LS}.
	
	\begin{theo}[\cite{LS}, Theorem 3.8]\label{thm4.6}
		For any $\lambda,\mu,c \in \C$, there is a one-to-one correspondence between restricted $\Glm$-module $W$ of level $c$ and  $\sigma_{\frac{\mu}{2}+\la}$-twisted module $(W,L_{-1},Y_W)$ of the vertex algebra $V(c)$ with
		\begin{equation*}
			Y_W(L_{-2} \mathbf{1}, x)=L(x),\quad Y_W(Y_{-\frac{3}{2}} \mathbf{1}, x)=x^{\frac{\mu}{2}+\la}\tilde{Y}(x),\quad Y_W(M_{-1} \mathbf{1}, x)=x^{\mu+2\la}M(x).
		\end{equation*}
	\end{theo}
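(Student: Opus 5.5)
The plan is to follow Li's theory of (twisted) local systems of vertex operators, as \cite{LS} does for Theorem \ref{thm4.3}. The twist $\sigma:=\sigma_{\frac{\mu}{2}+\la}$ is the automorphism of $V(c)$ that fixes $L_{-2}\mathbf{1}$. It multiplies the $Y$-generator $Y_{-\frac{3}{2}}\mathbf{1}$ by $e^{2\pi\sqrt{-1}\alpha}$ and the $M$-generator $M_{-1}\mathbf{1}$ by $e^{4\pi\sqrt{-1}\alpha}$, with $\alpha$ the exponent in the statement. It exists because the relations defining $V(c)$ are homogeneous for the grading that counts $Y$ with weight $1$ and $M$ with weight $2$: $[Y,Y]\subset M$, and the brackets with $L$ preserve each component. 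The proof then has three parts: restricted modules to twisted modules, the converse, and the check that the two constructions are mutually inverse.

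\emph{Restricted $\Rightarrow$ twisted.} Let $W$ be a restricted $\Glm$-module of level $c$. Set
\[
a(x)=L(x),\qquad b(x)=x^{\frac{\mu}{2}+\la}\tilde{Y}(x),\qquad m(x)=x^{\mu+2\la}M(x).
\]
By restrictedness, $L(x),\tilde{Y}(x),M(x)\in\mathcal{E}(W)$, so $x^{-\beta}$ times each field lies in $\mathcal{E}(W)$ for the appropriate exponent $\beta$. These are therefore $\sigma$-twisted vertex operators on $W$. The bracket formulas written after Definition \ref{VAmod} express every commutator as a finite sum of derivatives of $x_1^{-1}\delta(x_2/x_1)$ (at most third order). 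Multiplying by the powers $x_1^{\beta_1}x_2^{\beta_2}$ does not affect this. Hence $(x_1-x_2)^4$ annihilates every commutator, and $\{a,b,m,\mathbf{1}\}$ is a local set of twisted vertex operators. Li's twisted analogue of the local system theorem (cf. \cite{Lhs}) then says that the twisted vertex operators generated by this set form a vertex algebra $\langle S\rangle$, with $W$ a natural $\sigma$-twisted module over it.

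Next I will compute the twisted $n$-products $a_n a$, $a_n b$, $b_n b$, $a_n m$ in $\langle S\rangle$ from the commutator formula. The goal is to show that the modes of $a,b,m$ in $\langle S\rangle$ satisfy exactly the relations defining the untwisted algebra $\G=\G_{\la,-\frac{\la}{2}}$ acting on $V(c)$, with $\mathbf{1}$ annihilated by $\G_+$ and $C$ acting as $c$. The universal property of the induced module $V(c)$ then gives a vertex algebra homomorphism $V(c)\to\langle S\rangle$ sending $L_{-2}\mathbf{1}\mapsto a$, $Y_{-\frac{3}{2}}\mathbf{1}\mapsto b$, $M_{-1}\mathbf{1}\mapsto m$, compatible with $\sigma$. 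Pulling back along it makes $W$ a $\sigma$-twisted $V(c)$-module.

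\emph{Twisted $\Rightarrow$ restricted.} Conversely, let $(W,Y_W)$ be a $\sigma$-twisted $V(c)$-module. Define $L_n$, $Y_{n-\frac{1}{2}}$, $M_n$ as the coefficients of $L(x):=Y_W(L_{-2}\mathbf{1},x)$, $\tilde{Y}(x):=x^{-\frac{\mu}{2}-\la}Y_W(Y_{-\frac{3}{2}}\mathbf{1},x)$ and $M(x):=x^{-\mu-2\la}Y_W(M_{-1}\mathbf{1},x)$. The twisted Jacobi identity yields a twisted commutator formula. Feeding it the products $u_j v$ computed in $V(c)$, for $u,v\in\{L_{-2}\mathbf{1},Y_{-\frac{3}{2}}\mathbf{1},M_{-1}\mathbf{1}\}$ and $j\ge 0$, reproduces the four bracket formulas, and hence \eqref{def1.1}. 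Truncation $x^{\beta}Y_W(u,x)\in\mathcal{E}(W)$ gives restrictedness, and $C$ acts as $c$ because the twisted vacuum axiom forces it. Finally, the two constructions are mutually inverse, since each is determined by the three generating fields.

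\emph{Main obstacle.} The hardest step is the exponent bookkeeping in the twisted commutator formula. The factor $\left(\frac{x_1-x_0}{x_2}\right)^{-\alpha}$, expanded in $x_0$, contributes extra terms $-\alpha\, x_2^{-1}Y_W(u_0v,x_2)\,x_1^{-1}\delta(x_2/x_1)$. These must match exactly the terms $\frac{1}{2}(2\mu+\la)x_2^{-1}\tilde{Y}(x_2)\,x_1^{-1}\delta(x_2/x_1)$ and $(2\mu+\la)x_2^{-1}M(x_2)\,x_1^{-1}\delta(x_2/x_1)$ in the brackets with $L(x_1)$. This matching is what pins down the exponents on $x$ in the statement. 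My first step will be to redo the computation for $[L(x_1),\tilde{Y}(x_2)]$ with a general exponent $\beta$ in $x^{\beta}\tilde{Y}(x)$, solve for the $\beta$ that cancels the $x_2^{-1}$ term, and check that twice this value works for $M$. If the statement's normalization $\frac{\mu}{2}+\la$ does not come out of this, I would use the value forced by the computation, since only the existence of a consistent twist matters for the correspondence.
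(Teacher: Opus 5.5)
The paper gives no proof of its own here; the result is quoted from \cite{LS}. Your architecture is essentially the route behind that citation: generate a vertex algebra from the twisted fields via Li's local systems, compute the $n$-products, map $V(c)$ into it by the universal property, and conversely read the brackets off the twisted commutator formula. Two points need adjusting.

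\textbf{The normalization must be corrected, not merely hedged.} Your suspicion is justified, and the corrected version is what you should state and prove. Write $b(x)=x^{\beta}\tilde Y(x)$ and use the bracket $[L(x_1),\tilde Y(x_2)]$ displayed in Section 4. One gets
\[
[L(x_1),b(x_2)]=-b'(x_2)\,x_1^{-1}\delta\Big(\frac{x_2}{x_1}\Big)-\frac{\la+3}{2}\,b(x_2)\,\frac{\partial}{\partial x_2}x_1^{-1}\delta\Big(\frac{x_2}{x_1}\Big)+\Big(\beta+\mu+\frac{\la}{2}\Big)x_2^{-1}b(x_2)\,x_1^{-1}\delta\Big(\frac{x_2}{x_1}\Big).
\]
In $V(c)$, take $\omega=L_{-2}\mathbf{1}$ and $v=Y_{-\frac32}\mathbf{1}$. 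Then $\omega_0v=L_{-1}v=-Y_{-\frac52}\mathbf{1}=-Dv$, $\omega_1v=-\frac{\la+3}{2}v$, and $\omega_jv=0$ for $j\ge2$. (With the convention $[L_m,L_n]=(n-m)L_{m+n}$ the translation operator is $D=-L_{-1}$, so the triple should really read $(W,-L_{-1},Y_W)$.)

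Since $\omega$ is $\sigma$-fixed, the commutator formula for the pair $(\omega,v)$ is the untwisted one. Matching therefore forces $\beta=-(\mu+\frac{\la}{2})$. The same computation with $[L(x_1),M(x_2)]$ forces the exponent $-(2\mu+\la)=2\beta$ on $M$. Consequently $\sigma$ must act on $Y_{-\frac32}\mathbf{1}$ by $e^{2\pi\sqrt{-1}(\mu+\la/2)}$.

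The displayed exponents $\frac{\mu}{2}+\la$ and $\mu+2\la$ agree with these values only when $\mu=-\la$. For example, at $\mu=-\frac{\la}{2}$ with $\la\neq0$ (the untwisted case of Theorem \ref{thm4.3}), they leave a nonzero term $\frac{3\la}{4}x_2^{-1}b(x_2)x_1^{-1}\delta(x_2/x_1)$. So your argument proves the statement with the corrected normalization. Say this explicitly rather than as a fallback.

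\textbf{The main obstacle is misdescribed.} In every bracket with $L(x_1)$ in front, the first vector is $\sigma$-fixed, so the factor $\big(\frac{x_1-x_0}{x_2}\big)^{-\alpha}$ equals $1$ and plays no role. The $x_2^{-1}$-term is absorbed instead by
\[
Y_W(Dv,x)=\frac{d}{dx}\big(x^{\beta}\tilde Y(x)\big)=x^{\beta}\tilde Y'(x)+\beta x^{\beta-1}\tilde Y(x),
\]
which is exactly how $\beta$ was pinned down above.

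The twist factor does matter in $[\tilde Y(x_1),\tilde Y(x_2)]$. Take $u=v=Y_{-\frac32}\mathbf{1}$. Expanding the twist factor produces $\alpha x_2^{-1}(x_2/x_1)^{\alpha}Y_W(u_1v,x_2)\,x_1^{-1}\delta(x_2/x_1)$. This involves $u_1v=2M_{-1}\mathbf{1}$, not $u_0v$, and carries the factor $(x_2/x_1)^{\alpha}$. With $\alpha=-\beta$, which the truncation condition requires anyway, this term cancels the $2\beta x_2^{-1}$-term coming from $Y_W(M_{-2}\mathbf{1},x)=\frac{d}{dx}\big(x^{2\beta}M(x)\big)$. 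The cancellation holds identically in $\beta$, so that bracket is only a consistency check.

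Finally, $\mu+\frac{\la}{2}$ is an arbitrary complex number, so $\sigma$ is typically of infinite order. Make sure the twisted local-system theorem you invoke covers semisimple automorphisms with arbitrary, not necessarily root-of-unity, eigenvalues. The reference \cite{Lhs} itself is the untwisted theory.
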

	By Theorems \ref{th3} and   \ref{thm4.6},   we immediately have the following result.
	\begin{prop}
		Let $c\in\mathbb{C}$.  Then simple restricted  $\mathcal{G}_{\lambda,\mu}$-modules given in Theorem  \ref{th3}   are precisely  simple  $\sigma_{\frac{\mu}{2}+\la}$-twisted $V(c)$-modules
		for  $\mu+\frac{\la}{2}\neq 0$.
	\end{prop}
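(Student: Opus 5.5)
The proposition follows by combining Theorem \ref{th3} with Theorem \ref{thm4.6}. The plan is to use the one-to-one correspondence of Theorem \ref{thm4.6} as a functor-level identification between two categories on the same underlying spaces, and then check that simplicity is preserved in both directions.

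First, fix $c$, and fix $(\la,\mu)$ with $\mu+\frac{\la}{2}\neq0$. For a restricted $\Glm$-module $W$ of level $c$, Theorem \ref{thm4.6} gives a $\sigma_{\frac{\mu}{2}+\la}$-twisted $V(c)$-module structure $(W,L_{-1},Y_W)$. Its vertex operators for the generators $L_{-2}\mathbf{1}$, $Y_{-\frac32}\mathbf{1}$ and $M_{-1}\mathbf{1}$ are $L(x)$, $x^{\frac{\mu}{2}+\la}\tilde Y(x)$ and $x^{\mu+2\la}M(x)$. Conversely, every such twisted module arises this way.

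Second, I would check that subspaces of $W$ stable under $\Glm$ coincide with subspaces stable under all twisted vertex operators $Y_W(v,x)$, $v\in V(c)$.
\begin{itemize}
\item One inclusion: the components of the three generating series are, up to the harmless monomial shifts $x^{\alpha}$, exactly the operators $L_n$, $Y_{n-\frac12}$ and $M_n$. Hence a $V(c)$-stable subspace is $\Glm$-stable.
\item Other inclusion: $V(c)$ is generated as a vertex algebra by these three vectors. Using the twisted Jacobi identity (iterate formulas/normal-ordered products), every component of every $Y_W(v,x)$ lies in the (completed) span of products of generator components acting on $W$. Hence a $\Glm$-stable subspace is $V(c)$-stable.
\end{itemize}
Therefore simple restricted $\Glm$-modules correspond exactly to simple $\sigma_{\frac{\mu}{2}+\la}$-twisted $V(c)$-modules.

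Third, I would restrict to the modules of Theorem \ref{th3}. These are the simple restricted modules with $(\la,\mu)\in\Gamma_1$ and some $M_a$ injective, each isomorphic to some $\mathrm{Ind}(V)$. Under the correspondence they are precisely the simple twisted modules whose underlying $\Glm$-module satisfies these hypotheses. By Theorem \ref{th1}, each $\mathrm{Ind}(V)$ is simple and restricted, so every such induced module does occur.

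The only real obstacle is the second step. For nonintegral twist exponents one must ensure that the generating property of $V(c)$ transfers to twisted modules. I would simply cite the standard fact from \cite{LS} (used in proving Theorem \ref{thm4.6}) that twisted modules for a vertex algebra generated by a local system are determined by the generating fields, so submodules agree.
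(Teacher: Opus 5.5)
Your proposal is correct and takes the same route as the paper, which proves the proposition in one line by combining Theorem \ref{th3} with Theorem \ref{thm4.6}. You additionally spell out why simplicity transfers across the correspondence, which the paper leaves implicit: a subspace is $\Glm$-stable exactly when it is stable under all twisted vertex operators, because $V(c)$ is generated by $L_{-2}\mathbf{1}$, $Y_{-\frac{3}{2}}\mathbf{1}$ and $M_{-1}\mathbf{1}$. One small point concerns your third step, which is not needed for the proposition: restrictedness of $\mathrm{Ind}(V)$ does not follow from Theorem \ref{th1} alone. It follows from Theorem \ref{th1}(II) together with Theorem \ref{th2}, or from condition (b) and the argument at the end of the proof of Theorem \ref{th2}.
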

	
	\section{Some examples}\label{Sec5}
	In this section, we study two important classes of restricted modules over the deformed Schr\"{o}dinger-Virasoro algebra $\Glm$: highest weight modules and Whittaker modules. Due to the presence of deformation parameters $\lambda$ and $\mu$, the simplicity criteria involve additional constraints on these parameters.
	
	\subsection{Highest weight modules and Verma modules}\label{sec5.1}
	We do not need the assumption $\Re(\mu)\in[-1,0)$ in this subsection.

	For $\Glm$ with $(\la,\mu)\in\Gamma_1$, since $[L_0,M_n]=(n+2\mu)M_n$, we have the Cartan subalgebra $\mathfrak{h}_{\mu}=\mathrm{Span}_{\C}\{L_0,\,\delta_{0,\mu}M_0,\,C\}$ of $\Glm$.
	
	If $\mu=0$, we can get the triangular decomposition $\G_{\la,0} = \G_{\la,0}^{-} \oplus \mathfrak{h}_{0} \oplus \G_{\la,0}^{+}$, where
	\begin{equation*}
		\G_{\la,0}^{+} = \bigoplus_{r\in\frac{1}{2}\Z_+}\G_{\la,0}^{(r)}, \quad \G_{\la,0}^{-} = \bigoplus_{r\in\frac{1}{2}\Z_+}\G_{\la,0}^{(-r)}.
	\end{equation*}
	Let $\mathfrak{b}_{0}=\G_{\la,0}^{+} \oplus \mathfrak{h}_{0}$. Take the $\mathfrak{b}_{0}$-module $H(\Delta, \gamma, c)=\C v$ with the parameter $(\Delta, \gamma, c)\in\C^3$ such that:
	\begin{equation*}
		L_0 v = \Delta v, \quad M_0 v = \gamma v, \quad C v = c v, \quad \G_{\la,0}^{+} v = 0.
	\end{equation*}
	One can check that $H(\Delta, \gamma, c)$ is a simple highest weight  $\G_{\la,0}^{(0,0)}$-module corresponding to the case $t=d_1=d_2=0$ in Theorem \ref{th1}. Thus we can get the simple Verma module
	$$
	V_{\la,0}(\Delta, \gamma, c) = \mathrm{Ind}\big(H(\Delta, \gamma, c)\big)
	$$
	over $\G_{\la,0}$.
	
	If $\mu\neq 0$, the Cartan subalgebra $\mathfrak{h}_{\mu}$ turns into $\mathrm{Span}_{\C}\{L_0,\,C\}$. Let $\Glm = \Glm^{-} \oplus \mathfrak{h}_{\mu} \oplus \Glm^{+}$, where
	\begin{equation*}
		\Glm^{+} = \bigoplus_{r\in\frac{1}{2}\Z_+}\Glm^{(r)}, \quad \Glm^{-} = (\bigoplus_{r\in\frac{1}{2}\Z_+}\Glm^{(-r)})\oplus \C M_{0}.
	\end{equation*}
	In order to use Theorem \ref{th1} to construct the Verma module of $\Glm$, we consider a simple highest weight $\Glm^{(d_1,d_2)}$-module $U$. It follows from $\mathfrak{h}_{\mu}\subset\Glm^{(d_1,d_2)}$  and $d_1,d_2\in\N$ that  $d_1=d_2=0$, which means $\Glm^{(d_1,d_2)}=\Glm^{+}\oplus\mathfrak{h}_{\mu} \oplus\C M_0$. Besides, by $\Glm^{+}U=0$ and the action of $M_t$ on $U$ is injective for some $t\in \N$, one can deduce that $t=0$. However, the case is not similar to $\mu=0$. We can conclude that the following result holds.
	
	\begin{prop}
		If $\mu\neq 0$, there does not exist any simple highest weight $(\Glm^{+}\oplus\mathfrak{h}_{\mu} \oplus\C M_0)$-module satisfying the condition that  the action of $M_0$ on the module is injective.
	\end{prop}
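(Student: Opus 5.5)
The plan is to exploit the fact that $[L_0,M_0]=2\mu M_0$. When $\mu\neq0$, this means $M_0$ strictly shifts $L_0$-weights, so the powers $M_0^k v$ of a highest weight vector span a module in which $M_0$ can never be injective unless the module fails to be simple. Throughout, write $\mathfrak{b}=\Glm^{+}\oplus\mathfrak{h}_{\mu}\oplus\C M_0$.

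Suppose, for a contradiction, that $U$ is a simple highest weight $\mathfrak{b}$-module on which $M_0$ acts injectively. Fix a highest weight vector $0\neq v\in U$, so that $L_0v=\Delta v$, $Cv=cv$ and $\Glm^{+}v=0$. Since $U$ is simple, $U=\mathcal{U}(\mathfrak{b})v$.

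The first step is to show that $\Glm^{+}$ is an ideal of $\mathfrak{b}$ and that $[\Glm^{+},M_0]\subseteq\mathrm{Span}\{M_n\mid n\in\Z_+\}$. Both follow from \eqref{def1.1}: $[L_n,M_0]=(2\mu-\lambda n)M_n$, while $[Y_{n-\frac12},M_0]=[M_n,M_0]=0$. Also $[L_0,\Glm^{+}]\subseteq\Glm^{+}$. Using PBW ordering with $\Glm^{+}$ placed to the right, we then get $\mathcal{U}(\mathfrak{b})v=\mathrm{Span}\{M_0^kv\mid k\in\N\}$. Moreover
\[
L_0M_0^kv=(\Delta+2k\mu)M_0^kv,
\]
and these eigenvalues are pairwise distinct because $\mu\neq0$.

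Next, set $W=\mathrm{Span}\{M_0^kv\mid k\geq1\}$ and check that $W$ is a $\mathfrak{b}$-submodule. It is clearly stable under $M_0$, $L_0$ and $C$. For $x\in\Glm^{+}$, expand
\[
xM_0^kv=\sum_{a+b=k-1}M_0^a[x,M_0]M_0^bv+M_0^kxv .
\]
Here $[x,M_0]$ is a combination of $M_n$ with $n>0$, each of which commutes with $M_0$ and kills $v$, and $xv=0$. Hence $\Glm^{+}W=0$.

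Finally, compare weights. The vector $v$ has $L_0$-weight $\Delta$, while every vector of $W$ is a sum of weight vectors of weights $\Delta+2k\mu\neq\Delta$ with $k\geq1$. Therefore $v\notin W$, and $W$ is a proper submodule. By simplicity $W=0$, so $M_0v=0$, contradicting the injectivity of $M_0$.

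The only point needing care is the commutation step showing $\Glm^{+}W=0$, together with interpreting ``highest weight'' as the existence of such a $v$. Everything else is a direct weight computation.
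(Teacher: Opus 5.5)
Your proof is correct and follows the same route as the paper: it also sets $u_i=M_0^iu$, uses $L_0u_i=(\Delta+2i\mu)u_i$, and observes that $\bigoplus_{i\geq1}\C u_i$ is a nonzero proper submodule. You additionally verify the closure under $\Glm^{+}$ and the properness (via the distinct $L_0$-weights when $\mu\neq0$), which the paper leaves as ``one can directly check''.
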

	\begin{proof}
		We now proceed by contradiction. If there exists a simple highest weight $(\Glm^{+}\oplus\mathfrak{h}_{\mu} \oplus\C M_0)$-module $U(\Delta, c)$, with the highest weight vector $u$ such that
		\begin{equation*}
			L_0 u = \Delta u, \quad C u = c u, \quad \G_{\la,\mu}^{+} v = 0
		\end{equation*}
		for some $\Delta, c\in\C$ and the action of  $M_0$ on $U(\Delta, c)$ is injective, then we denote $u_i=M_0^{i} u,\, \forall\, i\in\N$.
		
		By (\ref{def1.1}) and $2\mu M_0 u_i =[L_0,M_0]u_i=(L_0M_0-M_0L_0)u_i$, we have
		\begin{equation*}
			L_0 u_{i+1}= L_0 M_0 u_i= 2\mu M_0 u_i+ M_0 L_0 u_i=2\mu  u_{i+1}+ M_0 L_0 u_i,
		\end{equation*}
		where $i\in\N$. By induction on $i\in\N$, it is clear that $L_0 u_{i}= (\Delta+2i\mu) u_{i}$ for all $i\in\N$. Take the subspace $U'=\oplus_{i=1}^{\infty}\C u_i$. One can directly check that $U'$ is a proper submodule of $U(\Delta, c)$, which is a contradiction.
	\end{proof}
	
	Hence, if $\mu\neq 0$, we can only construct a reducible highest weight $(\Glm^{+}\oplus\mathfrak{h}_{\mu} \oplus\C M_0)$-module $U(\Delta, c)=\oplus_{i=0}^{\infty}\C u_i$ with the parameter $(\Delta, c)\in\C^2$ such that:
	\begin{equation*}
		L_0 u_i = \Delta u_i, \quad M_0 u_i = (\Delta+2i\mu) u_i, \quad C u_i = c u_i, \quad \Glm^{+} u_i = 0, \quad \forall\,i\in\N.
	\end{equation*}
	By Remark \ref{rema3.3}, $U(\Delta, c)$ corresponds to the case $t=d_1=d_2=0$. Therefore, we can get a reducible Verma  $\Glm$-module
	$$
	V_{\la,\mu}(\Delta, c) = \mathrm{Ind}\big(U(\Delta, c)\big),
	$$
	which has the proper submodule $V_{\la,\mu}^{(n)}(\Delta, c)=\mathrm{Ind}(\oplus_{i=n}^{\infty}\C u_i)$ for any $n\in\Z_+$. Then the quotient module $V_{\la,\mu}(\Delta, c)/$ $V_{\la,\mu}^{(1)}(\Delta, c)$ is a simple highest weight module with the highest weight $(\Delta, c)$.
	
	\subsection{Whittaker modules}
	Now, we consider the Whittaker modules induced from a character $\psi$ of the positive part.
	
	Let $\psi: \Glm^{+} \to \C$ be a Lie algebra homomorphism. For $\Glm$ with $(\la,\mu)\in\Gamma_1$, the condition $\psi([\Glm^{+}, \Glm^{+}]) = 0$ implies
	\begin{eqnarray*}
		&&\psi(L_{n+2})=n^{-1}\psi([L_{1},L_{n+1}])=0, \\
		&&\psi(M_{n+1})=n^{-1}\psi([Y_{n+\frac{1}{2}},Y_{\frac{1}{2}}])=0,
	\end{eqnarray*}
	and
	\begin{equation}\label{equ5.1}
		\big(1-(\la+1)n+2\mu\big)\psi(Y_{n+\frac{1}{2}})=2\psi([L_{n},Y_{\frac{1}{2}}])=0,
	\end{equation}
	where $n \in \Z_+$. Since $(\la,\mu)\in\Gamma_1$, we have
	\begin{equation}\label{equ5.2}
		i+\lambda (i+t)-2 \mu\neq 0,\quad \forall\ i\in\Z_+\textrm{ and }t\in\N.
	\end{equation}
	Taking $(i,t)=(n-1,1)$ in (\ref{equ5.2}), by (\ref{equ5.1}), one can deduce that $\psi(Y_{n+\frac{1}{2}})=0$ for all $n\geq 2$. If $n=1$, (\ref{equ5.1}) turns into
	$$
	(2\mu-\la)\psi(Y_{\frac{3}{2}})=0.
	$$
	Let $W=\C w$ be a one-dimensional vector space with
	$$xw=\psi(x)w,\quad C w = c w,\quad \forall\,\, x\in\Glm^{+}.$$
	Then $W$ is a simple $(\Glm^{+}\oplus \C C)$-module if $\psi(M_1)\neq 0$. Now we set
	$$
	\psi(L_{1})=\Delta_1,\quad
	\psi(L_{2})=\Delta_2,\quad
	\psi(Y_{\frac{1}{2}})=\nu_1,\quad
	\psi(Y_{\frac{3}{2}})=\delta_{\la,2\mu}\nu_2,\quad
	\psi(M_{1})=\gamma\neq 0,
	$$
	where $(\Delta_1, \Delta_2, \nu_1, \nu_2, \gamma)\in \C^5$. Thus, the induced module
	$$
	W(\Delta_1, \Delta_2, \nu_1, \nu_2, \gamma, c)=\mathcal{U}(\Glm^{(1,1)})\otimes_{\mathcal{U}(\Glm^{+}\oplus \C C)}W
	$$
	is a $\Glm^{(1,1)}$-module. For convenience, we denote
	$W(\psi)=
	W(\Delta_1, \Delta_2, \nu_1, \nu_2, \gamma, c)$.
	One can find the element of $W(\psi)$ is a finite sum of the form
	\begin{equation}\label{equ5.3}
		a_{i_1,i_2,j,k}M_{0}^{i_1}M_{-1}^{i_2}Y_{-\frac{1}{2}}^{j}L_{0}^{k}w,\quad \textrm{ for }i_1,i_2,j,k\in\N\textrm{ and }a_{i_1,i_2,j,k}\in\C.
	\end{equation}
	For any $w'\in W(\psi)$ as in \eqref{equ5.3}, we denote by $\mathrm{supp}(w')$ the set of all $(i_1,i_2,j,k)\in\N^{4}$  such that $a_{i_1,i_2,j,k}\neq 0$. Then, we denote by $\mathrm{deg}(w')$ the maximal element in $\mathrm{supp}(w')$ under the following total order
	\begin{equation*}\aligned
		(i_4,i_3,i_2,i_1)&\succ (j_4,j_3,j_2,j_1)\  \\
		&\Leftrightarrow \ \mathrm{ there\ exists} \ r\in\Z_+ \ \mathrm{such \ that} \ (j_s=i_s,\ \forall\, 1\leq s<r) \ \mathrm{and} \ i_r>j_r,
		\endaligned
	\end{equation*}
	where $(i_4,i_3,i_2,i_1),\,(j_4,j_3,j_2,j_1)\in\N^{4}$. Now, taking any nonzero $w'\in W(\psi)$, suppose that $\mathrm{deg}(w')=(i_1,i_2,j,k)\in\N^{4}$. By $(\la,\mu)\in\Gamma_1$ and Lemma \ref{LYM}, the following results can be proved by the same method in Theorem \ref{th1}. Hence we omit the proof here.
	
	\textbf{Case 1:} If $k > 0$ and $\mu\neq-\frac{1}{2}$, then $\mathrm{deg}\big((M_{1} - \gamma)w'\big) = (i_1,i_2,j,k-1)$.
	
	\textbf{Case 2:} If $k = 0$ and $j > 0$, then $\mathrm{deg}\big((Y_\frac{3}{2} - \delta_{\la,2\mu}\nu_2)w'\big) = (i_1,i_2,j-1,0)$.
	
	\textbf{Case 3:} If $j= k = 0$ and $i_2 > 0$, then $\mathrm{deg}\big((L_{2} - \Delta_{2})w'\big) = (i_1,i_2-1,0,0)$.
	
	\textbf{Case 4:} If $i_2= j= k = 0$, $i_1 > 0$ and $\la\neq 2 \mu$, then $\mathrm{deg}\big((L_{1} - \Delta_{1})w'\big) = (i_1-1,0,0,0)$.
	
	In conclusion, if $\mu\neq-\frac{1}{2}$ and $\la\neq 2 \mu$, then $w\in \mathcal{U}(\Glm^{(1,1)})w'$, which means $\mathcal{U}(\Glm^{(1,1)})w'=W(\psi)$. Hence, $W(\psi)$ is a simple $\Glm^{(1,1)}$-module.
	It is clear that $W(\psi)$ satisfies the conditions of Theorem \ref{th1} with $t = d_1 = d_2 =1$. Thus, by Theorem \ref{th1}, we obtain the corresponding simple induced module $T(\psi)=\mathrm{Ind}\big(W(\psi)\big)$.
	
	\begin{remark}
		If $\Glm$ is the Schr\"{o}dinger-Virasoro algebra, i.e., $(\lambda, \mu)= (0, 0)$, we can deduce that $M_{0}$ is the center element of $\Glm$. Let $M_{0}w=\gamma'w$ for some $\gamma'\in \C$. From \textbf{Cases 1--3} and $M_{0}w=\gamma'w$, one can check $W(\psi)$ is still a simple $\Glm^{(1,1)}$-module and the  induced module $T(\psi)$ is simple, which corresponds to the module  constructed in \cite{CHS}.
	\end{remark}
	
	\subsection{The cases of $(\la,\mu)$}
	Finally, we consider some special $\la,\mu$.
	
	The deformed Schr\"{o}dinger-Virasoro algebra $\Glm$ provides a unified framework for several infinite-dimensional Lie algebras. By specializing the deformation parameters $(\lambda, \mu)$, we recover the following structures:
	
	\begin{enumerate}[(1)]
		\item \textbf{The Original Schr\"{o}dinger-Virasoro Algebra:}
		
		If $(\lambda, \mu)= (0, 0)$,
		the Lie brackets in $(\ref{def1.1})$ reduce to:
		\begin{equation*}
			[L_m, Y_{n}] = (n - \frac{1}{2}m - \frac{1}{2}) Y_{m+n}, \quad [L_m, M_n] = n M_{m+n}, \quad \forall \,\,  m,n\in\Z.
		\end{equation*}
		This is precisely the twisted Schr\"{o}dinger-Virasoro algebra introduced by Roger and Unterberger \cite{RU}. In this case, the Whittaker modules constructed in Section 5.2 coincide with the results in \cite{CHS}. The vertex algebra $V( c)$ becomes the standard Schr\"{o}dinger-Virasoro vertex operator algebra.
		
		\item \textbf{The Deformed $\mathfrak{bms}_3$ Algebra and Relation to $W(2,2)$ Algebra:}
		
		If $(\lambda, \mu)= (1, \frac{3}{2})$, the actions of $L_m$ on $Y_{n-\frac{1}{2}}$ and $M_n$ become:
		\begin{equation*}
			[L_m, Y_{n-\frac{1}{2}}] =(n-m+1)Y_{m+n-\frac{1}{2}},\quad [L_m, M_n] = (n - m +3) M_{m+n}, \quad \forall \,\,  m,n\in\Z.
		\end{equation*}
		By Lemma \ref{le2.4}, we have the isomorphism
		$$\varphi\ :\ L_n\mapsto L_n,\ Y_{n-\frac{3}{2}}  \mapsto J_{n},\ M_{n-3}  \mapsto I_{n},\ C\mapsto C,$$
		where $n\in\Z$. One can verify that $\G_{1, \frac{3}{2}}$ is  isomorphic to the deformed $\mathfrak{bms}_3$ algebra \cite{CCRS}.
		Here, $\{L_m, M_n\}$ generates a structure isomorphic to the $W(2,2)$ algebra (without the second central charge for $M$) \cite{ZD}. More generally, this implies that $\Glm$ contains the $W(a,b)$ algebra as an important subalgebra with $(a,b)=(2\mu,-\la)$. Our Theorem \ref{th1} and  Theorem \ref{th2} thus generalize the simplicity results for $W(a,b)$-modules.
	\end{enumerate}
	
	\vskip30pt
	
	\section*{Author Contributions}
	All authors contribute equally to this work.
	
	\section*{Data Availability Statement}
	This manuscript has no associated data.
	\section*{Conflicts of Interest} The authors declared that they have no conflict of interest to
	this work.
	\section*{Acknowledgements}
	The first author Haibo Chen thanks Professor  Haisheng  Li for teaching a short-term course on vertex operator algebra at Jimei University during 2025, and informs him of Reference \cite{LS}.
	

\vskip10pt
	
	\begin{thebibliography}{a}
		
		\bibitem{ALZ}
		D. Adamovi\'{c}, R. Lü, K. Zhao, Whittaker modules for the affine Lie algebra $A_1^{(1)}$, Adv. Math., {\bf 289}, 438-479(2016).
		
		\bibitem{CCRS}
		R. Caroca, P. Concha, E. Rodríguez, P. Salgado-Rebolledo, Generalizing the $\mathfrak{bms}_3$ and 2D-conformal algebras by expanding the Virasoro algebra, Eur. Phys. J. C, {\bf 78}, 262–276(2018).
		
		\bibitem{CG}
		H.J. Chen, X. Guo, New simple modules for the Heisenberg-Virasoro algebra, J. Algebra, {\bf 390}, 77-86(2013).
		
		\bibitem{CGLW}
		H.J. Chen, L. Ge, Z. Li, L. Wang, Classical Whittaker modules for the affine Kac-Moody algebras $A_N^{(1)}$, Adv. Math., {\bf 454}(109874), 60 pp(2024).
		
		\bibitem{CHS}
		H.B. Chen, Y. Hong, Y. Su, A family of new simple modules over the Schr\"{o}dinger-Virasoro algebra, J. Pure Appl. Algebra, {\bf 222}(4), 900-913(2018).
		
		\bibitem{Cqf}
		Q. Chen, Simple Restricted Modules for the Deformed $\mathfrak{bms}_3$ Algebra, Mathematics, {\bf 11}(4), 982(2023).
		
		\bibitem{CY}
		Q. Chen, Y. Yao, Simple restricted modules for the universal central extension of the planar Galilean conformal algebra, J. Algebra, {\bf 634}, 698-721(2023).
		
		\bibitem{CYZ}
		Y. Chen, Y. Yao, K. Zhao, Simple smooth modules over the Ramond algebra and applications to vertex operator superalgebras, Math. Z., {\bf 310}(2), No. 28, 17 pp(2025).
		
		\bibitem{FGXZ}
		V. Futorny, X. Guo, Y. Xue, K. Zhao, Smooth representations of affine Kac-Moody algebras, Adv. Math., {\bf 481}(110559), 34 pp(2025).
		
		\bibitem{GG}
		D. Gao, Y. Gao, Representations of the planar Galilean conformal algebra,
		Comm. Math. Phys., {\bf391} , 199-221(2022).
		
		\bibitem{Hm}
		M. Henkel, Schr\"{o}dinger invariance and strongly anisotropic critical systems, J. Stat. Phys., \textbf{75}, 1023--1029(1994).
		
		\bibitem{Kac}
		V. Kac, {\it Vertex Algebras for Beginners}, University Lecture Series, vol. 10, Amer. Math. Soc., 1997.
		
		\bibitem{LPX}
		D. Liu, Y. Pei, L. Xia, A category of restricted modules for the Ovisenko-Roger algebra, Algebr. Represent. Theory, {\bf 25}(3), 777-791(2022).
		
		\bibitem{LPX2}
		D. Liu, Y. Pei, L. Xia, Simple restricted modules for Neveu-Schwarz algebra, J. Algebra, {\bf 546}, 341-356(2020).
		
		\bibitem{Lhs}
		H. Li, Local systems of vertex operators, vertex superalgebras and modules, J. Pure Appl. Algebra, {\bf 109}(2), 143-195(1996).
		
		\bibitem{LS}
		H. Li, J. Sun,    On certain generalizations of the Schr\"{o}dinger-Virasoro algebra, {\it   J. Math. Phys.}  {\bf 56}(12),  121701(2015).
		
		\bibitem{LL}
		J. Lepowsky,  H. Li, Introduction to Vertex Operator Algebras and Their Representations, Progress in Math., {\bf 227},  (2004).
		
		\bibitem{LS2}
		J. Li, Y. Su,  Representations of the Schr\"odinger-Virasoro algebras, {\it J. Math. Phys.,} {\bf 49}(5),  053512(2008).
		
		\bibitem{MZ}
		V. Mazorchuk, K. Zhao, Simple Virasoro modules which are locally finite over a positive part, Selecta Math., {\bf 20}(3), 839-854(2014).
		
		\bibitem{MNTZ}
		Y. Ma, K. Nguyen, S. Tantubay, K. Zhao, Characterization of simple restricted modules, {\it   J. Algebra,}  {\bf 636},  1-19(2023).
		
		\bibitem{RU}
		C. Roger, J. Unterberger, The Schr\"{o}dinger-Virasoro Lie group and algebra: representation theory and cohomological study, {\it Ann. Henri Poincar\'{e}}, {\bf 7}, 1477-1529(2006).

	\bibitem{TYZ} H. Tan, Y. Yao,  K. Zhao,  Classification of simple smooth modules over the Heisenberg-Virasoro algebra, {\it Proc. Roy. Soc. Edinburgh Sect. A}, {\bf  155}(4), 1321-1365 (2025).
		
		\bibitem{TZ}
		S.  Tan, X. Zhang, Automorphisms and Verma modules for generalized Schr\"{o}dinger-Virasoro algebras, {\it J. Algebra,} {\bf 322}(12), 1379-1394(2009).
		
		\bibitem{Uj}
		J. Unterberger, On vertex algebra representations of the Schr\"{o}dinger-Virasoro Lie algebra, {\it Nucl. Phys. B}, {\bf 823}, 320--371(2009).
		
		\bibitem{ZD}
		W. Zhang, C. Dong, $W$-algebra $W(2,2)$ and the vertex operator algebra $L(\frac{1}{2},0) \otimes L(\frac{1}{2},0)$, {\it Commun. Math. Phys.}, {\bf 285}, 991--1004(2009).
		
		\bibitem{ZT}
		X. Zhang, S. Tan, Unitary representations for the Schr\"{o}dinger-Virasoro Lie algebra, {\it J. Algebra Appl.}, {\bf 12}, 1250132(2013).
		
		\bibitem{ZTL}
		X. Zhang, S. Tan, H. Lian,  Whittaker modules for the Schr\"{o}dinger-Witt algebra, {\it J. Math. Phys.,} {\bf 51}(8), 083524(2010).
		
	\end{thebibliography}
\end{document}